\theoremstyle{definition}
\newtheorem{theorem}{Theorem} 
\newtheorem{cor}[theorem]{Corollary}
\newtheorem{prop}[theorem]{Proposition}
\newtheorem{lemma}[theorem]{Lemma}
\newtheorem{mydef}[theorem]{Definition}
\newtheorem{question}[theorem]{Question}
\newtheorem{rem}[theorem]{Remark}
\newtheorem{remark}[theorem]{Remark}
\newcommand{\ds}{\displaystyle}
\newcommand{\R}{\mathbb{R}}
\newcommand{\N}{\mathbb{N}}
\newcommand{\Z}{\mathcal{Z}}
\newcommand{\C}{\mathbb{C}}
\newcommand{\K}{\mathbb{K}}
\newcommand{\e}{\varepsilon}
\newcommand{\eps}{\varepsilon}
\DeclareMathOperator{\re}{Re}
\DeclareMathOperator{\NA}{NA}
\DeclareMathOperator{\NRA}{NRA}
\DeclareMathOperator{\Id}{Id}
\renewcommand{\leq}{\leqslant}
\renewcommand{\geq}{\geqslant}
\renewcommand{\geq}{\geqslant}
\begin{document}
\title{On various types of density of numerical radius attaining operators}

\author[Dantas]{Sheldon Dantas}
\address[Dantas]{Department of Mathematics, Faculty of Electrical Engineering, Czech Technical University in Prague, Technick\'a 2, 166 27, Prague 6, Czech Republic \newline
	\href{http://orcid.org/0000-0001-8117-3760}{ORCID: \texttt{0000-0001-8117-3760} } }
\email{\texttt{gildashe@fel.cvut.cz}}

\author[Kim]{Sun Kwang Kim}
\address[Kim]{Department of Mathematics, Chungbuk National University, 1 Chungdae-ro, Seowon-Gu, Cheongju, Chungbuk 28644, Republic of Korea \newline
	\href{http://orcid.org/0000-0002-9402-2002}{ORCID: \texttt{0000-0002-9402-2002}  }}
\email{\texttt{skk@chungbuk.ac.kr}}

\author[Lee]{Han Ju Lee}
\address[Lee]{Department of Mathematics Education, Dongguk University - Seoul, 04620 (Seoul), Republic of Korea \newline
	\href{http://orcid.org/0000-0001-9523-2987}{ORCID: \texttt{0000-0001-9523-2987}  }
}
\email{\texttt{hanjulee@dongguk.edu}}

\author[Mazzitelli]{Martin Mazzitelli}
 \address[Mazzitelli]{Instituto Balseiro, CNEA-Universidad Nacional de Cuyo, CONICET, Argentina.}
 \email{\texttt{martin.mazzitelli@ib.edu.ar}}

\begin{abstract} In this paper, we are interested in studying two properties related to the denseness of the operators which attain their numerical radius: the Bishop-Phelps-Bollob\'as point and operator properties for numerical radius (BPBpp-nu and BPBop-nu, respectively). We prove that every Banach space with micro-transitive norm and second numerical index strictly positive satisfy the BPBpp-nu and that, if the numerical index of $X$ is 1, only one-dimensional spaces enjoy it. On the other hand we show that the BPBop-nu is a very restrictive property: under some general assumptions, it holds only for one-dimensional spaces. We also consider two weaker properties, the local versions of BPBpp-nu and BPBop-nu, where the $\eta$ which appears in their definition does not depend just on $\eps > 0$ but also on a state $(x, x^*)$ or on a numerical radius one operator $T$. We address the relation between the local BPBpp-nu and the strong subdifferentiability of the norm of the space $X$. We show that finite dimensional spaces and $c_0$ are examples of Banach spaces satisfying the local BPBpp-nu, and we exhibit an example of a Banach space with strongly subdifferentiable norm failing it. We finish the paper by showing that finite dimensional spaces satisfy the local BPBop-nu and that, if $X$ has strictly positive numerical index  and has the approximation property, this property is equivalent to finite dimensionality.

\end{abstract}

\thanks{The first author was supported by the project OPVVV CAAS CZ.02.1.01/0.0/0.0/16\_019/0000778 and by the Estonian Research Council grant PRG877. The second author was supported by the National Research Foundation of Korea(NRF) grant funded by the Korea government(MSIT) [NRF-2020R1C1C1A01012267]. The third author was supported by Basic Science Research Program through the National Research Foundation of Korea(NRF) funded by the Ministry of Education, Science and Technology [NRF-2020R1A2C1A01010377]. The fourth author was partially supported by CONICET PIP 11220130100329CO}

\subjclass[2010]{Primary 46B04; Secondary  46B07, 46B20}
\keywords{Banach space; norm attaining operators; Bishop-Phelps-Bollob\'{a}s property}

\maketitle

\section{Introduction}

E. Bishop and R. Phelps asked if it was possible to extend their result on denseness of norm attaining functionals to bounded linear operators (see \cite{BP}). J. Lindenstrauss, in \cite{Lind}, was the one who gave a negative answer for this question opening possibilities to develop a whole new theory with very elegant and deep results in connection to the geometry of the involved Banach spaces. Parallel to this, I.D. Berg and B. Sims initiated in \cite{BS} the study of the numerical radius attaining operators. Recall that an operator $T$ on $X$ attains the numerical radius if there are $x_0\in S_X$ and $x_0^*\in S_{X^*}$ such that $x_0^*(x_0) = 1$ and $|x_0^*(Tx_0)| = \sup |x^*(Tx)|$, the supremum being taken over all $x\in S_X$ and $x^*\in S_{X^*}$ such that $x^*(x) = 1$. Although for the numerical radius one has to deal with the extra condition $x^*(x) = 1$, and the techniques seem to require much more ingenuity and ability, it attracted the attention of many authors to see when the set of all numerical radius attaining operators is norm dense in the set of bounded operators. For instance, C.S. Cardassi proved that such denseness holds for classical Banach spaces as $\ell_1$, $c_0$, and $L_1(\mu)$ as well as for uniformly smooth Banach spaces (see \cite{C1,C2,C3}). We emphasize the fact that the denseness holds on every Banach space with the Radon-Nikod\'ym property (see \cite[Theorem 2.4]{AP}) but it does not hold in general (see \cite[Section 2]{P}).

After the Bishop-Phelps theorem had been shown, B. Bollob\'as in \cite{B} improved the theorem in the following sense: for given norm one elements $x$ and $x^*$ such that $x^*(x) \approx 1$, it is possible to get new elements $y$ and $y^*$ such that $y^*$ attains the norm at $y$, $y \approx x$, and $y^* \approx x^*$. That is, one can control the distances between the points and the functionals simultaneously. Since Bollob\'as' theorem is no longer true for operators due to Lindenstrauss' results, M. Acosta, R. Aron, D. Garc\'ia, and M. Maestre introduced a new property, which opened even more possibilities to develop the theory, called the {\it Bishop-Phelps-Bollob\'as property} (see \cite{AAGM}). A pair  of Banach spaces $(X, Y)$ satisfies the Bishop-Phelps-Bollob\'as property if it is possible to get a Bollob\'as' theorem for bounded linear operators from $X$ into $Y$, that is, for given norm one bounded linear operator $T$ and element $x$ such that $\|Tx\| \approx 1$, then there are new norm one bounded linear operator $S$ and $x_0$ such that $S$ attains the norm at $x_0$, with $x_0 \approx x$ and $S \approx T$. It is clear that this property implies the denseness of all operators that attain the norm. Again, parallel to the study of the Bishop-Phelps-Bollob\'as property, the study of this property for numerical radius was initiated (see \cite{F, GK, KLM}) and it brings us to the main topic of this paper. In order to detail it more precisely, we introduce the necessary notation and background.

Let $X$ be a Banach space over the scalar field $\K$, which can be either the real numbers $\R$ or the complex numbers $\C$. We denote by $S_X$ the unit sphere of $X$. The Banach space of all bounded linear operators from $X$ into itself is denoted by $\mathcal{L}(X)$ with the operator norm $\|T\|:= \sup \{\|T(x)\|: x \in S_X\}$ for each $T\in \mathcal{L}(X)$. Especially, the dual of $X$ is written as $X^*$. We define the set of states of $X$ by
\begin{equation*} \Pi(X)=\{(x,x^*)\in S_X\times S_{X^*}:~x^*(x)=1\}.
\end{equation*} 
The \emph{numerical radius} of $T \in \mathcal{L}(X)$ and the \emph{numerical index} of $X$ are defined, respectively, by
\begin{equation*} 
v(T)=\sup\{|x^*(T(x))|: (x,x^*)\in \Pi(X)\} \ \ \mbox{and} \ \ 
n(X)=\inf\{v(T):  T\in \mathcal{L}(X),\,\, \|T\|=1\}.
\end{equation*} 
It is clear that $0\leq n(X)\leq 1$ and $n(X)\|T\|\leq v(T)\leq \|T\|$ for all $T\in \mathcal{L}(X)$. So, if $n(X) = 1$, then $\|T\| = v(T)$ for every operator $T \in \mathcal{L}(X)$ and we are using this fact throughout the paper without any explicit reference. Moreover, $v(\cdot)$ is a seminorm in $\mathcal{L}(X)$ and if $n(X)>0$, then it becomes an equivalent norm to the usual one on $\mathcal{L}(X)$. With this notation in mind, we say that $T \in \mathcal{L}(X)$ {\it attains the numerical radius} if there is $(x_0, x_0^*) \in \Pi(X)$ such that $|x_0^*(T(x_0))| = v(T)$. We denote by $\NRA(X)$ the set of all numerical radius attaining operators on $X$. We refer the interested reader in this topic to the classical books \cite{BD1,BD2}.

We say that $X$ satisfies the {\it Bishop-Phelps-Bollob\'as property for numerical radius} (BPBp-nu, for short) if given $\e > 0$, there exists $\eta(\eps)>0$ such that
whenever $T\in \mathcal{L}(X)$ with $v(T) = 1$ and $(x, x^*)\in \Pi(X)$ satisfy 
\begin{equation*} 
|x^*(T(x))|>1-\eta(\eps),
\end{equation*} 
there are $S\in \mathcal{L}(X)$ with $v(S) = 1$ and $(y, y^*)\in \Pi(X)$ such that
\begin{equation*} 
|y^*(S(y))|=1, \ \ \  \|x-y\|<\eps, \ \ \ \|x^*- y^*\|<\eps \ \ \ \mbox{and} \ \ \ \|S - T\| < \e.
\end{equation*} 
It is immediate to see that if $X$ satisfies the BPBp-nu, then the set of all operators that attain the numerical radius is norm dense in $\mathcal{L}(X)$. As an overview of the known results, a Banach space $X$ satisfies the BPBp-nu when
\begin{itemize}
\item $X$ is finite dimensional (see \cite[Proposition 2]{KLM});
\item $X$ is uniformly convex and uniformly smooth, $n(X)>0$ (see \cite[Proposition 4 and 6]{KLM});
\item $X$ is Hilbertian  (see \cite[Corollary 3.3]{KLMM});
\item $X$ is $c_0$ or $\ell_1$  (see \cite{GK});
\item $X$ is $L_1(\mu)$ (see \cite[Theorem 9]{KLM} and also \cite[Theorem 9]{F});
\item $X$ is the subspace of all
\begin{itemize}
	\item[$\circ$] finite-rank operators on $L_1(\mu)$;
	\item[$\circ$]  compact operators on $L_1(\mu)$;
	\item[$\circ$]  weakly compact operators on $L_1(\mu)$,
\end{itemize}
where $\mu$ is $\sigma$-finite measure space (see \cite[Theorem 2.1 and Corollary 2.1]{AFS};
\item $X$ is $C(K)$ with compact metrizable $K$ (real) (see \cite[Theorem 2.2]{AGR}).
\end{itemize}

Very recently, a stronger property than the BPBp-nu was considered in \cite[Theorem 2.5]{CDJ}. Stronger in the sense that if we have $T \in \mathcal{L}(X)$ with $v(T) = 1$ and $(x, x^*) \in \Pi(X)$ satisfying $|x^*(T(x))|\approx 1$, then the new operator $S \in \mathcal{L}(X)$ with $v(S) = 1$ will satisfy $|x^*(S(x))| = 1$ and $S \approx T$. That is, we do not change the initial state $(x, x^*)$ where $T$ almost attains the numerical radius.
As occurs with the BPBp-nu, which is a  numerical radius version of the Bishop-Phelps-Bollob\'as property for operators defined in \cite{AAGM}, this new property is the corresponding of the Bishop-Phelps-Bollob\'as {\it point} property (see \cite{DKL} and the references therein) for the numerical radius.
This is one of the properties we are focusing on in this paper. Analogously, we consider a property that instead of fixing the state $(x,x^*)$, we fix the operator $T$. This is the corresponding version of the Bishop-Phelps-Bollob\'as {\it operator} property (see \cite{DKKLM} and the references therein) for the numerical radius. We are giving the precise definitions throughout the next sections.

Let us now describe the content  of this paper. In the next section, we consider both Bishop-Phelps-Bollob\'as point and operator properties for the numerical radius. In Theorem~\ref{hilbert}, we prove that a space with micro-transitive norm and second numerical index strictly positive satisfy the point property for the numerical radius.  In particular, real Hilbert spaces satisfy this property, a result that generalize \cite[Theorem 2.5]{CDJ}. We also show that, for Banach spaces with numerical index 1, both point and operator properties for the numerical radius are too strong, in the sense that just one-dimensional spaces enjoy it (see Proposition~\ref{index_1-1}). In Proposition~\ref{index_1-1-2} we focus on the operator property for the numerical radius showing that, indeed, it is a very restrictive property: under some very general assumptions on the space $X$,  the operator property holds if and only if $X$ is one-dimensional. 
In Section \ref{local}, we consider the corresponding local versions of the already mentioned point and operator properties for the numerical radius, meaning that the $\eta$ that appears in their definitions depends not only on $\e$, but also on a state $(x, x^*) \in \Pi(X)$ or on a numerical radius one operator $T$. In Proposition~\ref{index_1-3} we prove that the local point property for the numerical radius implies that the Banach space must have strongly subdifferentiable norm, whenever its numerical index is 1. We also prove that finite dimensional spaces with $n(X) > 0$ satisfy it. In particular, every complex finite dimensional Banach space satisfy this property. Moreover, in Theorem~\ref{c_0} we prove that the Banach space $c_0$ has the local point property for the numerical radius (whereas, on the other hand, $\ell_1$ fails it). As the strong subdifferentiability of the norm is very related to this property, we also ask if it is a sufficient condition for its validity: in Theorem~\ref{counterexample} we see that this is not the case, by exhibiting a counterexample. 
 We finish the paper by considering the local operator property for the numerical radius. In Theorem~\ref{fdim} and Proposition~\ref{Loo finite dim} we prove that every finite dimensional space enjoys this property and that, for spaces $X$ with the approximation property and $n(X) > 0$, the local operator property is, indeed, equivalent to finite dimensionality of the space.

\section{The point and operator properties for numerical radius}

In this section, we study both {\it uniform} Bishop-Phelps-Bollob\'as point and operator properties for numerical radius.  By uniform, we mean that the $\eta$ that appears in their definitions depends just on a given $\e > 0$ (in contrast with the {\it local} properties defined in Section~\ref{local}, where the $\eta$ depends on $\e$ and a state, or $\e$ and an operator). It is worth noting that the Bishop-Phelps-Bollob\'as point property for numerical radius was already introduced by Choi et al. in \cite{CDJ} in the context of complex Hilbert spaces.

\begin{mydef} \label{def1} Let $X$ be a Banach space. We say that $X$ has the
\begin{enumerate}
\item[\rm (i)]  {\it Bishop-Phelps-Bollob\'as point property for numerical radius} (BPBpp-nu, for short) if given $\e > 0$, there is $\eta(\e) > 0$ such that whenever $T \in \mathcal{L}(X)$ with $v(T) = 1$ and $(x, x^*) \in \Pi(X)$ satisfy $|x^*(T(x))| > 1 - \eta(\e)$, there exists $S \in \mathcal{L}(X)$ with $v(S) = 1$ such that $|x^*(S(x))| = 1$ and $\|S - T\| <\e$.

\vspace{0.2cm}
	
\item[\rm (ii)]  {\it Bishop-Phelps-Bollob\'as operator property for numerical radius} (BPBop-nu, for short) if given $\e>0$, there is $\eta(\e) > 0$ such that whenever $T \in \mathcal{L}(X)$ with $v(T) = 1$ and $(x, x^*) \in \Pi(X)$ satisfy $|x^*(T(x))| > 1 - \eta(\e)$, there is $(y, y^*) \in \Pi(X)$ such that $|y^*(T(y))| = 1$, $\|y - x\| < \e$, and $\|y^* - x^*\| < \e$.
\end{enumerate}
\end{mydef}

First, we focus on the point property.  We prove that every Banach space with micro-transitive norm and second numerical index strictly positive satisfy the {\bf BPBpp-nu} and, also, that this property is too strict when we consider Banach spaces with numerical index 1. In order to do this, we need some background.

Given a Banach space $X$,  an operator $T \in \mathcal{L}(X)$ is said to be {\it skew-hermitian} if $v(T) = 0$. We denote by $\mathcal{Z}(X)$ to the set of all skew-hermitian operators on $X$, which is a closed subspace of $\mathcal{L}(X)$. In the quotient space $\mathcal{L}(X) / \mathcal{Z}(X)$, we define $\|T + \mathcal{Z}(X)\| := \inf \{\|T - S\|: S \in \mathcal{Z}(X)\}$.  Then, we have that $v(T) \leq \|T + \mathcal{Z}(X)\|$ for every $T \in \mathcal{L}(X)$. The \emph{second numerical index} of  $X$  is defined by
\begin{eqnarray*}
	n'(X) &=& \max ~ \{k \geq 0~:~ k \|T + \mathcal{Z}(X)\| \leq v(T) \ \forall \ T \in \mathcal{L}(X) \}\\
	&=&\text{inf~} \left\{\frac{v(T)}{\|T+\mathcal{Z}(X)\|}~:~T \in \mathcal{L}(X)\setminus\mathcal{Z}(X)\right\} 
\end{eqnarray*}
We refer the interested reader on this topic to \cite{KLMM}, where many properties on the second numerical index were obtained and the condition $n'(X)=1$ was intensively studied.

Let $G$ be a Hausdorff topological group with the identity element $e$ and $T$ be a Hausdorff topological space. An \emph{action} $(\cdot.\cdot)$ of $(G,T)$ is a continuous function from $G\times T$ to $T$ such that $(e,t)=t$ and  $(g_1,(g_2,t))=(g_1 g_2,t)$ for every $g_1,g_2\in G$ and $t\in T$. The action is said to be \emph{transitive} if $T=\{(g,t)\colon g\in G\}$ for every $t\in T$, and said to be \emph{micro-transitive} if $\{(g,t)\colon g\in U\}$ is a neighborhood of $t$ in $T$ for every $t\in T$, whenever $U$ is a neighborhood of $e$ in $G$. Given a Banach space $X$, we may take the group of surjective isometries on $X$ as the group $G$ and $S_X$ as the topological space $T$. We then say that $X$ (or the norm of $X$) is \emph{micro-transitive} (respectively, \emph{transitive}) if the canonical action is micro-transitive (respectively, transitive). It is known that micro-transitivity of a norm implies transitivity. The famous open problem, known as the Mazur rotation problem, asks whether transitive separable Banach spaces are isometrically isomorphic to Hilbert spaces (see, for instance, \cite{Banach}). It is worth remarking that the non-separable version of the Mazur rotation problem had been solved negatively by Rolewicz (see \cite{Rolewicz}) and Hilbert spaces are the only known spaces with micro-transitive norms. We kindly send the interested reader on this topic to \cite{BecerraRodriguez2002,Effros} and the references therein.

Our main result in this section is the following.

\begin{theorem} \label{hilbert} Let $X$ be a Banach space. Suppose that the norm of $X$ is micro-transitive and that $n'(X) > 0$. Then, $X$ satisfies the BPBpp-nu.
\end{theorem}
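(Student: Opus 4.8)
The plan is to verify the defining implication of the BPBpp-nu after two normalisations, a reduction to a single state, and an explicit perturbation; the modulus $\eta(\e)$ is read off from $\e$ only at the very end. So fix $\e>0$ and suppose $T\in\mathcal{L}(X)$ has $v(T)=1$ and $(x,x^*)\in\Pi(X)$ satisfies $|x^*(T(x))|>1-\eta$. First, since $n'(X)>0$ we have $\|T+\mathcal{Z}(X)\|\le v(T)/n'(X)=1/n'(X)$, so we may pick a skew-hermitian $Z\in\mathcal{Z}(X)$ with $\|T-Z\|\le M:=1+1/n'(X)$. As $v$ is a seminorm vanishing on $\mathcal{Z}(X)$ we get $v(T-Z)=v(T)=1$; as $v(Z)=0$ forces $u^*(Z(u))=0$ for every state, in particular $x^*(Z(x))=0$ and $|x^*((T-Z)(x))|=|x^*(T(x))|>1-\eta$; and if $S_0$ solves the problem for $T-Z$ then $S_0+Z$ solves it for $T$ at the same distance. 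Thus we may assume $\|T\|\le M$; multiplying $T$ by a unimodular scalar (undone at the end) we may also assume $a:=x^*(T(x))\in(1-\eta,1]$ is real.

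Second, I would reduce to one state. A micro-transitive norm is smooth, so every point of $S_X$ has a unique supporting functional; hence the group $\mathcal{G}$ of surjective isometries of $X$, which acts transitively on $S_X$, acts transitively on $\Pi(X)$ as well (an isometry sending $x_0$ to $x$ automatically sends $x_0^*$ to $x^*$). Since conjugation $R\mapsto g^{-1}Rg$ preserves $\|\cdot\|$ and $v(\cdot)$, carries the value of $R$ at a state to the value of $g^{-1}Rg$ at the image state, and carries solutions back, it suffices to prove the implication for one fixed base state $(x_0,x_0^*)$, the resulting $\eta$ then serving for all states. I would also record the quantitative content of micro-transitivity to be used below: for every $\rho>0$ there is $\sigma>0$ such that any two points of $S_X$ at distance $<\sigma$ are joined by a $g\in\mathcal{G}$ with $\|g-\Id\|<\rho$; in particular, conjugating a norm-$\le M$ operator by such a $g$ moves it by at most $2M\rho$.

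Third, the construction at $(x_0,x_0^*)$, where now $\|T\|\le M$, $v(T)=1$, and $a=x_0^*(T(x_0))$ is real and close to $1$. Set $S':=T+(1-a)\,x_0^*\otimes x_0$, the rank-one operator $u\mapsto x_0^*(u)\,x_0$ being denoted $x_0^*\otimes x_0$. Then $x_0^*(S'(x_0))=1$, $\|S'-T\|=1-a<\eta$, and $v(S')\le v(T)+(1-a)\,v(x_0^*\otimes x_0)=1+(1-a)$ because $v(x_0^*\otimes x_0)=\sup_{(u,u^*)\in\Pi(X)}|x_0^*(u)|\,|u^*(x_0)|=1$. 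It remains to remove the overshoot $v(S')-1\in[0,1-a)$, i.e. to bring the numerical radius back to exactly $1$ at a cost that tends to $0$ with $\eta$, without touching the value at $(x_0,x_0^*)$. This is the analogue in the present generality of the spectral truncation of the self-adjoint part used for (complex, hence also real) Hilbert spaces in \cite[Theorem 2.5]{CDJ}: since the overshoot is caused only by the rank-one term, every state $(u,u^*)$ with $|u^*(S'(u))|>1$ has $|x_0^*(u)|\,|u^*(x_0)|$ bounded away from $0$, and — using the quantitative micro-transitivity together with smoothness (and strict convexity) of the norm — such states are confined, once $\eta$ is small, to an arbitrarily small neighbourhood of the $\mathcal{G}$-orbit of $(x_0,x_0^*)$. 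One then subtracts from $S'$ a correction of norm $O(\sqrt\eta)$, built by spreading a small operator supported near $x_0$ along near-identity isometries, which damps $u\mapsto u^*(S'(u))$ below $1$ on that neighbourhood while keeping $x_0^*(\cdot(x_0))=1$ fixed; the resulting $S''$ has $v(S'')=1$, $|x_0^*(S''(x_0))|=1$, $\|S''-T\|=O(\sqrt\eta)$. Undoing the unimodular normalisation and then the conjugation of the second step yields the operator $S$ required by the definition, and choosing $\eta(\e)$ small enough gives $\|S-T\|<\e$.

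I expect the third step to be the crux: lacking any spectral calculus, one must (i) quantify, via micro-transitivity, how the near-maximising states of an operator with $v=1$ that nearly attains at $(x_0,x_0^*)$ cluster around the isometry-orbit of that state, with bounds uniform in $\e$, and (ii) build a bounded-norm correction damping the operator exactly on that cluster while fixing the prescribed state. The hypothesis $n'(X)>0$ enters twice: in the first reduction, and — essentially — in keeping every operator in play of norm $\le M$, so that conjugation by isometries close to the identity is a small perturbation, which is what makes both the passage between states and the localisation argument quantitatively effective.
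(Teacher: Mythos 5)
Your first two reductions (subtracting a skew-hermitian operator to control $\|T\|$, and conjugating by isometries to fix a base state) are sound and parallel ingredients of the paper's argument. But your third step — the actual construction of $S$ — is a genuine gap, and it is precisely where all the difficulty of the theorem lives. After adding the rank-one term $(1-a)\,x_0^*\otimes x_0$ you correctly observe that $v(S')$ may exceed $1$, and you cannot simply rescale by $v(S')$ without destroying the exact attainment at $(x_0,x_0^*)$. Your proposed remedy — a correction of norm $O(\sqrt\eta)$ that damps $u\mapsto u^*(S'(u))$ below $1$ near $x_0$ while keeping $x_0^*(S''(x_0))=1$ — is asserted, not constructed. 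There is no analogue of the Hilbert-space spectral truncation of \cite[Theorem 2.5]{CDJ} in a general micro-transitive space, and ``spreading a small operator along near-identity isometries'' does not specify an operator, let alone one with the claimed norm bound. Note also that the localisation claim as stated is vacuous: since the isometry group acts transitively on $S_X$, the $\mathcal{G}$-orbit of $(x_0,x_0^*)$ is essentially all of $\Pi(X)$, so ``confined to a small neighbourhood of the orbit'' says nothing. Even the corrected version (states with $|u^*(S'(u))|>1$ force $|x_0^*(u)||u^*(x_0)|\approx 1$, hence $u$ near $\pm\lambda x_0$ by uniform convexity) only identifies \emph{where} the overshoot occurs; killing it there while pinning the value at $(x_0,x_0^*)$ to exactly $1$ is the hard analytic step and is missing.

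The paper takes a different route that avoids this construction entirely: micro-transitivity forces $X$ to be uniformly convex and uniformly smooth (\cite[Corollary 2.13]{CDKKLM}), and together with $n'(X)>0$ this gives the ordinary BPBp-nu (\cite[Theorem 3.2]{KLMM}, \cite[Proposition 4]{KLM}), which produces an operator $S_1$ attaining its numerical radius at some \emph{nearby} state $(y,y^*)$. Micro-transitivity is then used a second time, in the opposite direction from your step two: a near-identity isometry $U$ with $U(x)=y$ conjugates $S_1$ into an operator attaining at the original $(x,x^*)$ (uniform smoothness guaranteeing $U^*y^*=x^*$). In other words, the known BPBp-nu does the perturbative work that your third step leaves open, and micro-transitivity is only needed to transport the output state back. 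If you want to salvage your approach, you would in effect have to reprove that perturbation result from scratch; as written, the proposal does not establish the theorem.
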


Let us notice that if $X$ has a micro-transitive norm, then there is a function $\beta: (0, 2) \longrightarrow \R^+$ such that whenever $x, y \in S_X$ satisfy $\|x - y\| < \beta(\e)$, there is a surjetive isometry $T \in \mathcal{L}(X)$ satisfying $T(x) = y$ and $\|T - \Id_X\| < \e$, where $\Id_X$ is the identity operator on $X$ (see \cite[Proposition 2.1]{CDKKLM}). It is worth remarking that we may take $\beta$ so that $\beta(\e)<\e$ for any $\e \in (0, 2)$.

\begin{proof}[Proof of Theorem \ref{hilbert}] Suppose that $X$ is micro-transitive with some function $\e \mapsto \beta(\e)$. Then, by \cite[Corollary 2.13]{CDKKLM}, we see that $X$ is both uniformly convex and uniformly smooth. Moreover since $n'(X) > 0$, $X$ satisfies the BPBp-nu with some function $\e \mapsto \eta(\e)$ (see \cite[Theorem 3.2]{KLMM} and \cite[Proposition~4]{KLM}).

 Let $\e \in (0, 1)$ be given and set 
\begin{equation*} 
\e':=\frac{n'(X)}{2 +5 n'(X)}\e > 0\ \ \ \mbox{and} \ \ \ \eta'(\e') := \eta(\beta(\e')) > 0.
\end{equation*} 
Let $(x, x^*) \in \Pi(X)$ and $T \in \mathcal{L}(X)$ with $v(T) = 1$ be such that $|x^*( T(x) ) | > 1 - \eta'(\e')$. From the definition of the second numerical index, there exists $G\in \mathcal{Z}(X)$ so that
\begin{equation*} 
\|T+G\|<\frac{v(T)}{n'(X)}+\e'=\frac{1}{n'(X)}+\e'.
\end{equation*} 
Since $v(G) = 0$, we also have that $v(T+G)=1$ and $|x^*( (T+G)(x) ) | = |x^*(T(x))| > 1 - \eta'(\e')$. Therefore, there are $(y, y^*) \in \Pi(X)$ and $S_1 \in \mathcal{L}(X)$ with $v(S_1) = 1$ such that
\begin{equation*}
|y^*(S_1 (y))| = 1 \ \ \ \mbox{and} \ \ \  \max \left\{\|y^* - x^*\|, \|y - x\|, \|S_1 - (T+G)\| \right\} < \beta(\e')<\e'.
\end{equation*}
In particular $\|y - x\| < \beta(\e')$ and, since $X$ is micro-transitive with the function $\beta$, there is a linear surjective isometry $U \in \mathcal{L}(X)$ such that $U(x) = y$ and $\|U - \Id_X\| < \e'$. Now, notice that $1 = y^*(y) = y^*( U(x) ) = (U^*(y^*)) (x) $. Since $X$ is uniformly smooth and $x^*(x) = 1$, we have that $U^* (y^*) = x^*$ Analogously, we get that $(U^{-1})^* (x^*) = y^*$.

Define $S_2 := U^{-1} \circ S_1 \circ U \in \mathcal{L}(X)$. Since $\left((U^{-1})^*(z^*)\right)(U(z) ) = z^*(U^{-1} U (z))  =  z^*(z) = 1$, whenever $(z, z^*) \in \Pi(X)$, we have that $v(S_2) \leq v(S_1) = 1$. Moreover, the equality 
\begin{equation*}
| x^*(S_2 (x) )| = | (x^*( U^{-1} S_1 U (x) ) | \\
= | ( (U^{-1})^* (x^*))( S_1 U(x) ) | 
=| y^*( S_1 (y) )| 
= 1
\end{equation*}
shows that $S_2$ has numerical radius $1$ and it is attained at $(x, x^*) \in \Pi(X)$. Consequently, the operator $S:= S_2-G$ also has numerical radius $1$ and it is attained at $(x, x^*)$. Finally,  
\begin{eqnarray*}
\|(S_2 - G) - T \| &\leq& \|S_2-S_1\|+\|S_1-(T+G)\|\\
&=& \|(U^{-1}S_1U -S_1U\|+\|S_1U-S_1\| + \e' \\
&<& 2 \e' \|S_1\| + \e' \\
&<& 2 \e' \left(\frac{1}{n'(X)} +2\e'\right) + \e' \\
&=& \left(\frac{2 +(4\e'+1) n'(X)}{n'(X)} \right) \e'\leq \e.
\end{eqnarray*}
\end{proof}

Since real Hilbert spaces have second numerical index $1$ (see \cite[Theorem 2.3]{KLMM}), we have that real Hilbert spaces satisfy the BPBpp-nu. On the other hand, since the numerical index of a complex Banach space $X$ is always greater than or equal to $1/e$ (and, hence, strictly positive), we have $\mathcal{Z}(X) = \{0\}$ and, consequently, $n'(X) = n(X) > 0$. This means that we have the same result for complex Hilbert spaces. We state these results in the next corollary, which should be compared with \cite[Theorem~4.1]{CDJ}.(a).

\begin{cor} \label{hilbert1} Let $H$ be a (real or complex) Hilbert space. Then, $H$ has the  BPBpp-nu.	
\end{cor}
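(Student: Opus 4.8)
The plan is to deduce the statement directly from Theorem~\ref{hilbert}, so the real work reduces to checking that every (real or complex) Hilbert space $H$ satisfies the two hypotheses of that theorem: that its norm is micro-transitive and that $n'(H) > 0$.

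First I would recall that the norm of a Hilbert space is micro-transitive. The relevant group is the group of surjective isometries (the unitary group) acting on $S_H$; given $x, y \in S_H$ with $\|x - y\|$ small, one writes down an explicit unitary $U$ with $U(x) = y$ that is close to $\Id_H$ --- for instance a rotation supported on the (at most two-dimensional) subspace $\spann\{x, y\}$ and equal to the identity on its orthogonal complement --- and $\|U - \Id_H\|$ is controlled by $\|x - y\|$. This is classical (Hilbert spaces are the standard example of a micro-transitive norm), so I would cite it rather than reprove the estimate; the quantitative form with a function $\beta$ satisfying $\beta(\e) < \e$ is exactly what the remark following Theorem~\ref{hilbert} records via \cite[Proposition 2.1]{CDKKLM}, and for Hilbert spaces it can in any case be read off from the explicit rotation above.

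Second, I would verify $n'(H) > 0$, splitting into the two scalar cases. In the real case it is known that real Hilbert spaces have second numerical index equal to $1$ (see \cite[Theorem 2.3]{KLMM}), so in particular $n'(H) = 1 > 0$. In the complex case, every complex Banach space has numerical index at least $1/e$; hence $n(H) > 0$, which forces $v(\cdot)$ to be a genuine norm on $\mathcal{L}(H)$, so that the only skew-hermitian operator is $0$, i.e.\ $\mathcal{Z}(H) = \{0\}$. But then $\|T + \mathcal{Z}(H)\|$ coincides with the operator norm $\|T\|$, and comparing the two displayed expressions for $n'(H)$ one gets $n'(H) = n(H) \geq 1/e > 0$.

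With both hypotheses in hand, Theorem~\ref{hilbert} applies and yields that $H$ has the BPBpp-nu, which is the desired conclusion. I do not expect a genuine obstacle here: the corollary is essentially a packaging of Theorem~\ref{hilbert} together with the two cited facts, and the only mildly delicate point --- the uniform (as opposed to pointwise) control of the isometry in the definition of micro-transitivity --- is already built into the hypotheses of Theorem~\ref{hilbert}.
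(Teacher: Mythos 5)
Your proposal is correct and follows exactly the paper's own route: the corollary is deduced from Theorem~\ref{hilbert} by noting that Hilbert spaces are micro-transitive, that real Hilbert spaces have $n'(H)=1$ by \cite[Theorem 2.3]{KLMM}, and that in the complex case $n(H)\geq 1/e$ forces $\mathcal{Z}(H)=\{0\}$ and hence $n'(H)=n(H)>0$. Nothing essential differs from the argument given in the paragraph preceding the corollary.
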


In what follows, we focus on spaces with numerical index 1. Examples of such spaces include $C(K)$-spaces, $L_1(\mu)$-spaces, isometric preduals of $L_1(\mu)$, and all function algebras, such as the disk algebra $A(\mathbb{D})$ and $H^{\infty}$. We prove that these spaces satisfy neither the BPBpp-nu nor the  BPBop-nu.
To prove this, we need the following characterizations of uniformly smooth and uniformly convex Banach spaces. On the one hand, we have that $X$ is uniformly smooth if and only if given $\e>0$, there is $\eta(\e) > 0$ such that whenever $x \in S_X$ and $x^* \in S_{X^*}$ satisfy $|x^*(x)| > 1 - \eta(\e)$, there is $x_0^* \in S_{X^*}$ such that $|x_0^*(x)| = 1$ and $\|x_0^* - x^*\| < \e$ (see \cite[Proposition 2.1]{DKL}). On the other hand, it is proved in \cite[Theorem~2.1]{KL} that $X$ is uniformly convex if and only if given $\e > 0$, there is $\eta(\e) > 0$ such that whenever $x \in S_X$ and $x^* \in S_{X^*}$ satisfy $|x^*(x)| > 1 - \eta(\e)$, there is $x_0 \in S_X$ such that $|x^*(x_0)| = 1$ and $\|x_0 - x\| < \e$.

\begin{prop} \label{index_1-1} Let $X$ be a Banach space with $n(X) = 1$. The following are equivalent.

\begin{itemize}
\item[(a)] $X$ has the BPBpp-nu.
\item[(b)] $X$ has the BPBop-nu.
\item[(c)] $X$ is one-dimensional.
\end{itemize}
\end{prop}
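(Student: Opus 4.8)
The plan is to reduce the nontrivial content to one lemma --- \emph{if $n(X)=1$ and $X$ is simultaneously uniformly convex and uniformly smooth, then $\dim X=1$} --- and to show that each of (a) and (b) forces $X$ to be both uniformly convex and uniformly smooth. The implications (c)$\Rightarrow$(a) and (c)$\Rightarrow$(b) are immediate: if $X=\K$, then $|x^*(T(x))|=v(T)$ for every $T\in\mathcal{L}(X)$ and every $(x,x^*)\in\Pi(X)$, so one may take $S=T$, respectively $(y,y^*)=(x,x^*)$. Throughout I use $n(X)=1$, i.e.\ $v(S)=\|S\|$ for all $S\in\mathcal{L}(X)$; in particular, for any $x\in S_X$ and $x^*\in S_{X^*}$ the rank-one operator $T_{x^*,x}\colon u\mapsto x^*(u)\,x$ has $v(T_{x^*,x})=\|T_{x^*,x}\|=1$. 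I also pass freely to unimodular multiples of functionals so that the relevant scalar products become real and nonnegative, and I use the quoted characterizations of uniform smoothness (for every $\eps>0$ there is $\eta>0$ so that $x\in S_X$, $x^*\in S_{X^*}$ with $|x^*(x)|>1-\eta$ yields $x_0^*\in S_{X^*}$ with $|x_0^*(x)|=1$, $\|x_0^*-x^*\|<\eps$) and of uniform convexity (the same, with $x_0\in S_X$, $|x^*(x_0)|=1$, $\|x_0-x\|<\eps$).

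To get uniform smoothness from (a): given $x\in S_X$, $x^*\in S_{X^*}$ with $x^*(x)>1-\eta$, pick $z^*\in S_{X^*}$ with $z^*(x)=1$, so $(x,z^*)\in\Pi(X)$, and apply the BPBpp-nu to $T:=T_{x^*,x}$ at $(x,z^*)$; since $|z^*(T(x))|=|x^*(x)|>1-\eta$, this yields $S$ with $v(S)=\|S\|=1$, $|z^*(S(x))|=1$ and $\|S-T\|<\eps$. Then $x_0^*:=S^*(z^*)$ works: $|x_0^*(x)|=|z^*(S(x))|=1$ forces $\|x_0^*\|=1$, and $T^*(z^*)=x^*$ (because $z^*(x)=1$) gives $\|x_0^*-x^*\|=\|(S^*-T^*)(z^*)\|\le\|S-T\|<\eps$. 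Hence $X$ is uniformly smooth, so reflexive; every functional now attains its norm, and the same operator yields uniform convexity: if $x^*(x)>1-\eta$, choose $w\in S_X$ with $x^*(w)=1$ and apply the BPBpp-nu to $T_{x^*,x}$ at $(w,x^*)$ (note $|x^*(T(w))|=x^*(x)>1-\eta$); the resulting $S$ has $x_0:=S(w)\in S_X$ with $|x^*(x_0)|=1$ and $\|x_0-x\|=\|S(w)-x^*(w)\,x\|\le\|S-T\|<\eps$. For (b) the argument is the symmetric one, using the same operator $T_{x^*,x}$ but reading off the new \emph{state}. Applying the BPBop-nu at $(x,z^*)$, with $z^*(x)=1$, gives $(y,y^*)\in\Pi(X)$ with $|x^*(y)|\,|y^*(x)|=|y^*(T(y))|=1$, hence $|x^*(y)|=1$ and $\|y-x\|<\eps$: this is uniform convexity, so $X$ is reflexive; then applying the BPBop-nu at $(w,x^*)$, with $x^*(w)=1$, gives $(y,y^*)$ with $|x^*(y)|\,|y^*(x)|=1$, so $|y^*(x)|=1$ and $\|y^*-x^*\|<\eps$: this is uniform smoothness.

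It remains to prove the Lemma, and this is the step I expect to be the main obstacle. Assume $n(X)=1$, $X$ uniformly convex and uniformly smooth, and $\dim X\ge 2$; I derive a contradiction with $n(X)=1$. Fix $x\in S_X$ and let $x^*=J(x)$ be its unique norming functional (smoothness). Since $\dim X\ge 2$, choose $y\in S_X\cap\ker x^*$ and set $T:=T_{x^*,y}$, so $\|T\|=1$ and $v(T)=1$. As every $(u,u^*)\in\Pi(X)$ has $u^*$ equal to the unique norming functional $J(u)$ of $u$, we may write $v(T)=\sup_{u\in S_X}|x^*(u)\,J(u)(y)|$. I claim this supremum is strictly less than $1$: if $(u_n)\subset S_X$ had $|x^*(u_n)\,J(u_n)(y)|\to 1$, then $|x^*(u_n)|\to 1$ and $|J(u_n)(y)|\to 1$; by reflexivity pass to a subsequence with $u_n\rightharpoonup u$, so $|x^*(u)|=1=\|u\|$, whence $u$ is a unimodular multiple of $x$ by strict convexity, and then $u_n\to u$ in norm by the Radon--Riesz property of uniformly convex spaces; finally norm--weak$^*$ continuity of $J$ gives $J(u_n)(y)\to J(u)(y)=0$ (because $J(u)$ is a unimodular multiple of $x^*$ and $x^*(y)=0$), contradicting $|J(u_n)(y)|\to 1$. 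Therefore $v(T)<\|T\|$, so $n(X)<1$, a contradiction; hence $\dim X=1$. Combined with the previous paragraph this proves (a)$\Rightarrow$(c) and (b)$\Rightarrow$(c), completing the scheme.

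The delicate point, as indicated, is the computation inside the Lemma: one must evaluate the numerical radius of $T_{x^*,y}$ through the duality map and then push it strictly below $\|T_{x^*,y}\|$ by a weak-compactness argument. This is exactly why reflexivity must be established first --- via uniform smoothness for (a), via uniform convexity for (b) --- before extracting the second modulus estimate, and why neither uniform convexity (used for strict convexity and for Radon--Riesz) nor uniform smoothness (used for the single-valued, norm--weak$^*$ continuous duality map) can be dispensed with. The handling of unimodular scalars in the complex case is routine and does not affect the argument.
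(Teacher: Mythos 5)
Your proof is correct, and its first half coincides with the paper's: both arguments feed the rank-one operator $x^*\otimes x$ (which has $v=\|\cdot\|=1$ precisely because $n(X)=1$) into the BPB hypothesis and read off the characterizations of uniform smoothness from \cite[Proposition 2.1]{DKL} and of uniform convexity from \cite[Theorem 2.1]{KL}, using the adjoint of the perturbed operator for (a) and the perturbed state for (b). Where you genuinely diverge is the concluding step. The paper stops after proving uniform smoothness, observes that $X^*$ is then uniformly convex while $X$ has the alternative Daugavet property, and invokes \cite[Theorem 2.1]{KMMP} to get $\dim X=1$, declaring (b) analogous. You instead make a second pass through the hypothesis --- legitimately, since uniform smoothness (resp.\ convexity) already gives reflexivity and hence the norm-attainment of the functionals you need --- to obtain \emph{both} uniform convexity and uniform smoothness, and then prove a self-contained lemma: if $n(X)=1$, $X$ is uniformly convex and uniformly smooth and $\dim X\ge 2$, then for $y$ a norm-one vector in the kernel of the norming functional of some $x\in S_X$, the operator $u\mapsto x^*(u)y$ has numerical radius strictly less than its norm. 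I checked that argument (weak compactness from reflexivity, uniqueness of the norming point from strict convexity, the Kadec--Klee property, norm-to-weak$^*$ continuity of the duality mapping, and the unimodular-scalar bookkeeping in the complex case) and it is sound. Your route buys self-containedness --- no appeal to the convexity/smoothness results for numerical-index-one spaces of \cite{KMMP} --- and an explicit treatment of (b), at the price of a longer argument and two applications of the BPB hypothesis where the paper needs only one.
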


\begin{proof} Let us prove (a)$\Leftrightarrow$(c). In order to do this, we only need to show that if $X$ satisfy the BPBpp-nu then it is one-dimensional, since the converse is trivial. Assume that the following holds.

\noindent\textbf{Claim:} if $X$ has the BPBpp-nu and $n(X)=1$, then $X$ is uniformly smooth.

\noindent In that case, we have that the dual space $X^*$ is uniformly convex and satisfies the alternative Daugavet property (since $n(X)=1$). By \cite[Theorem 2.1]{KMMP}, $X$ must be one-dimensional.

To prove the claim above, suppose that $X$ has the BPBpp-nu with some function $\e \mapsto \eta(\e)$. Let $\e \in (0, 1)$ be given and let $x_0^* \in S_{X^*}$ and $x_0 \in S_X$ be such that $|x_0^*(x_0)| > 1 - \eta(\e)$. Let $x_1^* \in S_{X^*}$ be such that $x_1^*(x_0) = 1$ and consider the operator $T := x_0^* \otimes x_0$. By hypothesis we have that $v(T) = \|T\| = 1$ and, since $|x_1^*(T(x_0))| = |x_1^*(x_0^*(x_0)x_0)| = |x_0^*(x_0)| > 1 - \eta(\e)$, that there is $S \in \mathcal{L}(X)$ with $v(S) = \|S\| = 1$ (again, by the hypothesis $n(X)=1$) such that $|x_1^*(S(x_0))| = 1$ and $\|S - T\| < \e$. Define $z_0^* := S^*x_1^* \in X^*$. Then, we have that $\|z_0^*\| \leq 1$ and $|z_0^*(x_0)| = 1$. This shows, in particular, that $\|z_0^*\| = 1$. Also, for all $x \in S_X$,
	\begin{eqnarray*}
		|z_0^*(x) - x_0^*(x)| &=& |x_1^*(S(x)) - x_0^*(x) x_1^*(x_0)| \\
		&=& |x_1^*(S(x)) - x_1^*(x_0^*(x)x_0)| \\
		&\leq& \|S(x) - T(x)\| 
	\end{eqnarray*}
Thus, $\|z_0^* - x_0^*\|\leq \|S-T\|< \e$ and so $X$ is uniformly smooth. 

Since the equivalence (b)$\Leftrightarrow$(c) follows analogously, we omit the proof.
\end{proof}

In particular, we have that $\ell_1$ and $c_0$ are examples of Banach spaces with numerical index 1 which satisfy the BPBp-nu but fail both BPBpp-nu and BPBop-nu by Proposition \ref{index_1-1}. In the next section, we show that $c_0$ satisfies a local point property, but still fail the local operator one (see Theorem~\ref{c_0} and Proposition~\ref{Loo finite dim}, respectively). As a matter of fact, the situation for the uniform and local operator properties seem to be very restrictive: even 2-dimensional spaces does not enjoy it. To see that, let $X$ be a 2-dimensional Banach space and consider $\{(e_1, e_1^*), (e_2, e_2^*)\}$ the Auerbach basis of the space $X$. Then, $x=e_1^*(x)e_1+e_2^*(x)e_2$ for every $x\in X$. Put $\beta_n=1-\frac{1}{n}$ and define the operator $T_n\colon X\to X$ by $T_n(x)=\beta_n e_1^*(x)+e_2^*(x)e_2$. It is easy to see that  $v(T_n)=1$ and $|e_1^*(T_n(e_1))|=\beta_n=1-\frac{1}{n}$. However, $T_n$ does not attain the numerical radius at any state $(y,y^*) \in \Pi(X)$ close to $(e_1, e_1^*)$. Indeed, if it was the case, we would have
$$
1\leq |y^*(T_n(y))|\leq \|T_n(y)\|\leq \beta_n + (1-\beta_n) |e_2^*(y)|\leq 1
$$ 
and, consequently, $|e_2^*(y)|=1$, which clearly implies $\|e_1-y\|\geq 1$. It is worth noting that a similar argument works for $n$-dimensional spaces with $n\geq 2$. Then, any finite dimensional Banach space of dimension greater than one fails the BPBop-nu.

Moreover, noting that the BPBop-nu trivially implies its local version and taking Proposition~\ref{Loo finite dim} below into account, we see that if $n(X)>0$, $X$ has the approximation property and $X$ has the BPBop-nu, then $X$ must be one-dimensional. Hence, we have the following result.

 \begin{theorem} \label{index_1-1-2} Let $X$ be a Banach space  which satisfies one of the following conditions.
 \begin{itemize}
\item[(a)] $n(X) = 1$
\item[(b)] $n(X)>0$ and $X$ has the approximation property
\item[(c)] $X$ is finite dimensional
\end{itemize}
Then, $X$ has the BPBop-nu if and only if it is one-dimensional.
\end{theorem}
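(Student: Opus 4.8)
The plan is to prove the two implications separately, handling the forward implication by splitting into the three cases (a), (b), (c). The implication ``one-dimensional $\Rightarrow$ BPBop-nu'' needs none of the hypotheses and is immediate: if $\dim X = 1$, then every $T \in \mathcal{L}(X)$ is a scalar multiple of $\Id_X$, so for \emph{every} state $(x, x^*) \in \Pi(X)$ one has $|x^*(T(x))| = v(T)$; hence every operator of numerical radius one already attains it at every state, and BPBop-nu holds with, say, $\eta(\e) \equiv 1$. Thus the whole content is the reverse implication: under each of (a), (b), (c), one must show that the BPBop-nu forces $\dim X = 1$.

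Case (a) requires no new argument --- it is exactly the implication ``(b)$\Rightarrow$(c)'' of Proposition~\ref{index_1-1}, according to which a space $X$ with $n(X) = 1$ enjoying the BPBop-nu is one-dimensional. For case (c), I would run the construction sketched in the discussion immediately preceding the statement. Assuming $\dim X = n \geq 2$, fix an Auerbach basis $\{(e_j, e_j^*)\}_{j=1}^{n}$ of $X$, set $\beta_m := 1 - \tfrac1m$ for $m \in \N$, and take $T_m \in \mathcal{L}(X)$ to be the operator that multiplies the first coordinate by $\beta_m$ and fixes the others, i.e.\ $T_m(x) := \beta_m e_1^*(x) e_1 + \sum_{j=2}^{n} e_j^*(x) e_j$. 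As in the two-dimensional model computation given in the text, one checks that $v(T_m) = 1$, that $|e_1^*(T_m(e_1))| = \beta_m \to 1$, and that no state $(y, y^*) \in \Pi(X)$ with $\|y - e_1\| < 1$ can satisfy $|y^*(T_m(y))| = 1$. Fixing any $\e \in (0, 1)$ and then choosing $m$ so large that $\beta_m > 1 - \eta(\e)$ contradicts the definition of BPBop-nu, so $\dim X = 1$.

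For case (b), I would first observe that BPBop-nu trivially implies the local operator property for the numerical radius (the local version merely allows the modulus $\eta$ to depend on the operator in addition to $\e$, so one may keep the same $\eta$). Then Proposition~\ref{Loo finite dim} --- which states that a Banach space with strictly positive numerical index and the approximation property enjoying the local BPBop-nu must be finite dimensional --- applies, so $X$ is finite dimensional, and case (c), already settled, yields $\dim X = 1$. I expect no genuine obstacle in the assembly above: the only honest computation is the verification in case (c) that $T_m$ does not attain its numerical radius at a state near $(e_1, e_1^*)$ for every $n \geq 2$ (which proceeds exactly as in the displayed estimate in the text), while the one external ingredient is Proposition~\ref{Loo finite dim}, whose proof is carried out in Section~\ref{local}; so the substantive work lies in that proposition rather than in the present argument.
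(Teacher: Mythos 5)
Your overall architecture is exactly the paper's: case (a) is Proposition~\ref{index_1-1}, case (b) follows from the trivial implication BPBop-nu $\Rightarrow$ {\bf L}$_{o,o}$-nu together with Proposition~\ref{Loo finite dim} and then case (c), and case (c) is the Auerbach-basis construction sketched before the statement. Cases (a) and (b), and the easy converse, are fine. The problem is your execution of case (c) in dimension $n\geq 3$.

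The operator you choose, $T_m(x)=\beta_m e_1^*(x)e_1+\sum_{j=2}^n e_j^*(x)e_j=x-(1-\beta_m)e_1^*(x)e_1$, coincides with the paper's operator only when $n=2$. For $n\geq 3$ the claims you import from the two-dimensional computation fail: neither $v(T_m)=1$ nor the key estimate $\|T_m(y)\|\leq \beta_m+(1-\beta_m)|e_2^*(y)|$ holds in general, because $\|y-e_1^*(y)e_1\|=\|\sum_{j\geq 2}e_j^*(y)e_j\|$ can exceed $1$ for $y\in S_X$. Concretely, take $X=\R^3$ with $\|x\|=\max\{|x_1|,|x_2|,|x_3|,|x_2+x_3-x_1|\}$; the coordinate vectors and functionals form an Auerbach basis, $y=(1,1,1)\in S_X$ and $y^*(x)=-x_1+x_2+x_3$ give a state $(y,y^*)\in\Pi(X)$, and $y^*(T_m(y))=1+(1-\beta_m)>1$, so $v(T_m)>1$ and the chain $1\leq|y^*(T_m(y))|\leq\|T_m(y)\|\leq\beta_m+(1-\beta_m)|e_2^*(y)|\leq 1$ you rely on is unavailable. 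The repair is a one-line change that matches the paper's intent: use $T_m:=\beta_m\Id_X+(1-\beta_m)\,e_2^*\otimes e_2$ (which equals the two-dimensional operator when $n=2$). Then $\|T_m(y)\|\leq\beta_m\|y\|+(1-\beta_m)|e_2^*(y)|\leq 1$ for every $y\in S_X$, $v(T_m)=|e_2^*(T_m(e_2))|=1$, $|e_1^*(T_m(e_1))|=\beta_m$ since $e_2^*(e_1)=0$, and any state $(y,y^*)$ with $|y^*(T_m(y))|=1$ forces $|e_2^*(y)|=1$, hence $\|y-e_1\|\geq|e_2^*(y-e_1)|=1$. With this choice the rest of your argument goes through verbatim.
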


We finish this section by asking some questions.

\begin{question} Let $X$ be a Banach space.
\begin{itemize}
	\item[(1)] If $X$ has the BPBpp-nu and $0<n(X)<1$, then $X$ is uniformly smooth? 
	\item[(2)] If $X$ has the BPBop-nu and $0<n(X)<1$, then $X$ is one-dimensional?
\end{itemize}
\end{question}

\section{Local Properties} \label{local} 

Motivated by the very restrictive behavior of the uniform versions (as we have seen in the previous section), we are now dealing with weaker properties: we consider the local versions of both BPBpp-nu and BPBop-nu. By weaker, we mean that the $\eta$ that appears in their definitions does not depend just on a given $\e > 0$ but also on a state $(x, x^*) \in \Pi(X)$ or on a numerical radius one operator $T \in \mathcal{L}(X)$. For the Bishop-Phelps-Bollob\'as properties, these local properties were already considered in \cite{DKLM, DKLM2}. Here, we keep a similar notation.

\begin{mydef} \label{def2} Let $X$ be a Banach space. We say that $X$ has the 
\begin{itemize}
\item[(i)]
{\bf L}$_{p,p}$-nu if given $\e > 0$ and $(x, x^*) \in \Pi(X)$, there is $\eta(\e, (x, x^*)) > 0$ such that whenever $T \in \mathcal{L}(X)$ with $v(T) = 1$ satisfies $|x^*(T(x))| > 1 - \eta(\e, (x, x^*))$,	there is $S \in \mathcal{L}(X)$ with $v(S) = 1$ such that $|x^*(S(x))| = 1$ and $\|S - T\| <\e$.

\vspace{0.2cm}	 

\item[(ii)] {\bf L}$_{o,o}$-nu if given $\e > 0$ and $T\in \mathcal{L}(X)$ with $v(T)=1$, there is $\eta(\e, T) > 0$ such that whenever $(x, x^*) \in \Pi(X)$ satisfies $|x^*(T(x))| > 1 - \eta(\e, T)$,
there is $(y, y^*) \in \Pi(X)$ such that $|y^*(T(y))| = 1$, $\|y - x\| <\e$, and $\|y^* - x^*\| <\e$. 
	\end{itemize}
\end{mydef}

\noindent 
It is immediate to notice that the BPBpp-nu and the BPBop-nu imply properties {\bf L}$_{p,p}$-nu and {\bf L}$_{o,o}$-nu, respectively, and also that if $X$ satisfies the {\bf L}$_{o,o}$-nu, every operator attains its numerical radius.

We start by proving that all spaces with numerical index 1 which satisfy the {\bf L}$_{p,p}$-nu must be strongly subdifferentiable. Let us recall that a norm in a Banach space $X$ is {\it strongly subdifferentiable} (SSD, for short) at $x \in X$ whenever the limit $\lim_{t \rightarrow 0^+} \frac{1}{t}(\|x + th\| - \|x\|)$ exists uniformly for $h \in B_X$. The norm of any finite dimensional space and the sup-norm on $c_0$ are examples of SSD norms. Moreover, the $\ell_1$-norm is SSD only at points in the sphere of $\ell_1$ that are sequences with finitely many nonzero terms. For a background on this topic, we refer the reader to \cite{FP} and the references therein.  

\begin{prop} \label{index_1-3} Let $X$ be a Banach space with $n(X) = 1$. If $X$ has the {\bf L}$_{p,p}$-nu, then the norm of $X$ is SSD.	
\end{prop}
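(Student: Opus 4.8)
The plan is to adapt the proof technique used for the claim inside Proposition~\ref{index_1-1} (that BPBpp-nu plus $n(X)=1$ implies uniform smoothness), localizing it so that the quantitative control now depends on the fixed state $(x_0,x_0^*)$. Recall that for spaces with numerical index $1$, strong subdifferentiability of the norm admits a Bishop-Phelps-Bollob\'as-type characterization: the norm is SSD at $x_0 \in S_X$ if and only if, for every $\e>0$, there is $\eta = \eta(\e,x_0)>0$ such that whenever $x^* \in S_{X^*}$ satisfies $|x^*(x_0)| > 1 - \eta$, there is $x_0^* \in S_{X^*}$ with $|x_0^*(x_0)| = 1$ and $\|x_0^* - x^*\| < \e$ (this is the local analogue of the uniform smoothness characterization from \cite[Proposition 2.1]{DKL}; such a pointwise-to-dual reformulation of SSD is standard, see \cite{FP}). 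So it suffices to fix $x_0 \in S_X$ and derive this $\eta(\e,x_0)$ from the {\bf L}$_{p,p}$-nu.

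First I would fix $\e \in (0,1)$ and $x_0 \in S_X$. Pick any $x_1^* \in S_{X^*}$ with $x_1^*(x_0) = 1$, so that $(x_0, x_1^*) \in \Pi(X)$, and apply the {\bf L}$_{p,p}$-nu to this particular state to obtain $\eta(\e, (x_0,x_1^*)) > 0$; set $\eta(\e, x_0) := \eta(\e, (x_0,x_1^*))$. Now suppose $x^* \in S_{X^*}$ satisfies $|x^*(x_0)| > 1 - \eta(\e,x_0)$; by multiplying $x^*$ by a unimodular scalar we may assume $x^*(x_0) = |x^*(x_0)|$ is real and positive. Consider the rank-one operator $T := x^* \otimes x_0$, i.e. $T(z) = x^*(z)\, x_0$. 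Then $\|T\| = 1$ and, since $n(X) = 1$, also $v(T) = 1$; moreover $|x_1^*(T(x_0))| = |x^*(x_0)\, x_1^*(x_0)| = |x^*(x_0)| > 1 - \eta(\e, (x_0,x_1^*))$. Hence, by {\bf L}$_{p,p}$-nu applied with the fixed state $(x_0,x_1^*)$, there is $S \in \mathcal{L}(X)$ with $v(S) = \|S\| = 1$ (using $n(X)=1$ again) such that $|x_1^*(S(x_0))| = 1$ and $\|S - T\| < \e$.

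Then I would mimic the final computation of the claim: set $z_0^* := S^* x_1^* \in X^*$. From $\|S\| = 1$ we get $\|z_0^*\| \le 1$, and from $|x_1^*(S(x_0))| = 1$ we get $|z_0^*(x_0)| = 1$, so in fact $\|z_0^*\| = 1$ and $z_0^*$ attains its norm at $x_0$. For any $z \in S_X$,
\begin{align*}
|z_0^*(z) - x^*(z)| &= |x_1^*(S(z)) - x^*(z)\, x_1^*(x_0)| \\
&= |x_1^*(S(z)) - x_1^*(x^*(z)\, x_0)| \\
&= |x_1^*(S(z) - T(z))| \\
&\le \|S(z) - T(z)\| \le \|S - T\| < \e,
\end{align*}
so $\|z_0^* - x^*\| < \e$. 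Finally, one should unwind the unimodular adjustment: if originally $x^*(x_0) = \lambda |x^*(x_0)|$ with $|\lambda| = 1$, applying the above to $\bar\lambda x^*$ produces a norm-attaining functional $\e$-close to $\bar\lambda x^*$, and multiplying back by $\lambda$ gives a norm-one functional attaining its norm at $x_0$ and $\e$-close to $x^*$. This verifies the SSD characterization at $x_0$, and since $x_0 \in S_X$ was arbitrary, the norm of $X$ is SSD.

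The main obstacle I anticipate is purely expository rather than mathematical: making sure the local SSD characterization in terms of dual functionals is either cited cleanly from \cite{FP} (or \cite{DKL}) or proved in a line, since the clean equivalence ``SSD at $x_0$ $\iff$ the local dual BPB condition at $x_0$'' is exactly the tool that converts the operator-level statement back into a statement about the norm; the numerical-index-$1$ hypothesis is what lets us pass from $v$ to $\|\cdot\|$ freely on rank-one operators, and without it the argument breaks, which is presumably why the hypothesis $n(X)=1$ is imposed. The unimodular-scalar bookkeeping is routine but should be mentioned so the complex case is not left dangling.
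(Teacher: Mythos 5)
Your argument is correct and is essentially identical to the paper's own proof: the paper also fixes $x_0$, chooses a supporting functional $x_0^*$ at $x_0$, applies the {\bf L}$_{p,p}$-nu to the rank-one operator $x^*\otimes x_0$ at the state $(x_0,x_0^*)$, and reads off the nearby norm-attaining functional as $S^*x_0^*$, using exactly the pointwise dual characterization of SSD (cited there as \cite[Theorem 2.3.(a)]{DKLM}). The only cosmetic difference is your unimodular-scalar normalization, which is not needed since the characterization only requires $|z^*(x_0)|=1$.
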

\begin{proof} In order to prove that $X$ is SSD, we use a characterization given in \cite[Theorem 2.3.(a)]{DKLM}, which says that the norm of $X$ is SSD at $x \in S_X$ if and only if given $\e>0$, there is $\eta(\e, x) > 0$ such that whenever $x^* \in S_{X^*}$ satisfies $|x^*(x)| > 1 - \eta(\e)$, there is $z^* \in S_{X^*}$ such that $|z^*(x)| = 1$ and $\|z^* - x^*\| < \e$.  Let $\e > 0$ and $x_0 \in S_X$ be given.

  Let $x_0^* \in S_{X^*}$ be such that $x_0^*(x_0) = 1$, which implies $(x_0, x_0^*) \in \Pi(X)$. Since $X$ has the {\bf L}$_{p,p}$-nu, we may consider $\eta(\e, x_0) := \eta(\e, (x_0, x_0^*)) > 0$. Let $x^* \in S_{X^*}$ be a functional such that $|x^*(x_0)| > 1 - \eta(\e, x_0)$ and define $T := x^* \otimes x_0$. Then $v(T) = \|T\| = \|x^*\| = 1$ and $|x_0^*(T(x_0))| = |x^*(x_0)| > 1 - \eta(\e, x_0)$. By hypothesis, there is $S \in \mathcal{L}(X)$ with $v(S) = \|S\| = 1$ such that $|x_0^*(S(x_0))| = 1$ and $\|S - T\| < \e$. Setting $z^* := S^* x_0^* \in B_{X^*}$, we have that $|z^*(x_0)| = |(S^* (x_0^*)) (x_0)| = |x_0^*(S(x_0))| = 1$.  Moreover, we get that $\|z^* - x^*\| < \e$. Indeed, for arbitrary $x \in S_X$,
	\begin{eqnarray*}
		|z^*(x) - x^*(x)| &=& |(S^* (x_0^*))(x) - x^*(x)| \\
		&=& |x_0^*(S(x)) - x^*(x)x_0^*(x_0)| \\
		&=& |x_0^*(S(x)) - x_0^*(x^*(x)x_0)| \\
		&=& |x_0^*(S(x)) - x_0^*(T(x))| \\
		&\leq& \|S(x) - T(x)\|.
	\end{eqnarray*}	
This implies that $\|z^*-x^*\| \leq \|S - T\| < \e$ which shows $X$ is SSD at $x_0 \in S_X$. Since $x_0$ is arbitrary, the norm of $X$ is SSD.
\end{proof} 

On the other hand, we do not know what happens in the general case.

\begin{question} If $X$ has the {\bf L}$_{p,p}$-nu and $n(X) \not= 1$, then $X$ must be SSD?
\end{question}

Let us show now some positive results regarding the validity of {\bf L}$_{p,p}$-nu. Thanks to Proposition~\ref{index_1-3}, if we are looking for spaces satisfying {\bf L}$_{p,p}$-nu, it is natural to look at those with strong subdifferentiable norm, even in case that $0<n(X)<1$.
Since finite dimensional spaces are SSD, we analyze the {\bf L}$_{p,p}$-nu for these spaces. Suppose that $X$ is finite dimensional and does not have property {\bf L}$_{p,p}$-nu. Then, by definition, there are $\e_0 > 0$ and  $(x_0, x_0^*) \in \Pi(X)$ such that there exists a sequence of operators $(T_n)  \subset \mathcal{L}(X)$ so that  
\begin{equation*} 
v(T_n) = 1 \geq |x_0^*(T_n(x_0))| \geq 1 - \frac{1}{n} 
\end{equation*} 
for all $n \in \N$ and, whenever $S \in \mathcal{L}(X)$ satisfies $v(S)=1$ and $\|T_n-S\| < \e$, the number $|x_0^*(S(x_0))|$ is {\it strictly} smaller than 1. If the set of operators with numerical radius $1$ is compact, then a subsequence of $(T_n)$ converges to an operator $T_0$, also with numerical radius $1$, and, in this case, we have $|x_0^*( T_0(x_0))| = 1$, which is a contradiction. It is clear that if $n(X)>0$, then $v(\cdot)$ is an equivalent norm on $\mathcal{L}(X)$. Hence, if $X$ is finite dimensional and $n(X)>0$, the closed unit ball of $(\mathcal{L}(X),v(\cdot))$ is compact. So, we have the following result.

\begin{prop} \label{finite} Let $X$ be a finite dimensional Banach space with $n(X) > 0$. Then, $X$ satisfies property {\bf L}$_{p,p}$-nu.
\end{prop}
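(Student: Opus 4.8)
The plan is to argue by contradiction via a soft compactness argument, following the paragraph that precedes the statement. First I would negate property {\bf L}$_{p,p}$-nu: this produces some $\e_0 > 0$ and some state $(x_0, x_0^*) \in \Pi(X)$ such that no choice of $\eta > 0$ works for this $\e_0$ and this state. Taking $\eta = 1/n$ then yields a sequence $(T_n) \subset \mathcal{L}(X)$ with $v(T_n) = 1$ and $|x_0^*(T_n(x_0))| > 1 - 1/n$, but such that every $S \in \mathcal{L}(X)$ with $v(S) = 1$ and $\|S - T_n\| < \e_0$ satisfies $|x_0^*(S(x_0))| < 1$.

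Next I would use the hypothesis $n(X) > 0$. Combined with the inequality $n(X)\|T\| \le v(T) \le \|T\|$, this shows that $v(\cdot)$ is actually a norm on $\mathcal{L}(X)$, equivalent to the operator norm. Since $X$, and hence $\mathcal{L}(X)$, is finite dimensional, the set $\{T \in \mathcal{L}(X) : v(T) \le 1\}$ is norm-compact. So I would pass to a subsequence $T_{n_k} \to T_0$ in $\mathcal{L}(X)$. By continuity of $v(\cdot)$ with respect to $\|\cdot\|$ we still have $v(T_0) = 1$, and by continuity of the map $T \mapsto x_0^*(T(x_0))$ we obtain $|x_0^*(T_0(x_0))| = \lim_k |x_0^*(T_{n_k}(x_0))| = 1$.

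Finally I would extract the contradiction: for $k$ large enough $\|T_{n_k} - T_0\| < \e_0$, and then $S := T_0$ is an operator with $v(S) = 1$, with $\|S - T_{n_k}\| < \e_0$, and with $|x_0^*(S(x_0))| = 1$, which directly contradicts the defining property of the sequence $(T_n)$. Hence $X$ must satisfy {\bf L}$_{p,p}$-nu.

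I do not expect any genuine obstacle in this argument; it is essentially bookkeeping around continuity and compactness. The single place where the hypotheses are really used is $n(X) > 0$: this is what upgrades the seminorm $v(\cdot)$ to an equivalent norm, so that the relevant sublevel set is compact (not merely bounded for a seminorm) and the limiting operator $T_0$ still has $v(T_0) = 1$. Without it one could not rule out $v(T_0)$ dropping below $1$, and the scheme would break down.
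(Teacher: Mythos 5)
Your argument is correct and is essentially identical to the one the paper gives in the paragraph preceding the statement: negate the property at a fixed state, extract a sequence $(T_n)$, use $n(X)>0$ to upgrade $v(\cdot)$ to an equivalent norm so that its unit sphere in $\mathcal{L}(X)$ is compact, and pass to a convergent subsequence to produce the contradicting operator $T_0$. Nothing to add.
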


Arbitrary complex Banach spaces always have strictly positive numerical index. So, we have the following consequence from Theorem \ref{finite}.

\begin{cor} Every finite dimensional complex Banach space has {\bf L}$_{p,p}$-nu.
\end{cor}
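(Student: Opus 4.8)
The plan is to derive this as an immediate corollary of Proposition~\ref{finite}. The one extra ingredient is the classical fact, recalled in the paragraph just before the statement, that every complex Banach space $X$ satisfies $n(X)\geq 1/e$, and in particular $n(X)>0$. (This is the classical Bohnenblust--Karlin--Glickfeld estimate, obtained by applying the maximum modulus principle to the entire function $\lambda\mapsto x^*(e^{\lambda T}x)$ for $(x,x^*)\in\Pi(X)$ and comparing its value and derivative at the origin; it may simply be quoted here.)

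Given a finite dimensional complex Banach space $X$, the previous sentence yields $n(X)\geq 1/e>0$, so $X$ is a finite dimensional Banach space with $n(X)>0$, and Proposition~\ref{finite} applies verbatim, giving that $X$ has property {\bf L}$_{p,p}$-nu. Thus the entire proof consists of checking the hypothesis $n(X)>0$ of Proposition~\ref{finite} in the complex setting.

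I expect no genuine obstacle here: all the real work is already contained in Proposition~\ref{finite}, whose argument uses nothing about the scalar field beyond $n(X)>0$ --- this guarantees that $v(\cdot)$ is an equivalent norm on $\mathcal{L}(X)$, hence that the closed unit ball of $(\mathcal{L}(X),v(\cdot))$ is compact when $\dim X<\infty$, which is precisely what makes the compactness argument on an almost-optimizing sequence $(T_n)$ go through. In the complex case one moreover has $\mathcal{Z}(X)=\{0\}$, so $v(\cdot)$ is already a norm rather than a mere seminorm, and no adjustment at all to the proof of Proposition~\ref{finite} is required.
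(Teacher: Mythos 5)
Your proposal is correct and is exactly the paper's argument: the corollary is obtained by combining Proposition~\ref{finite} with the classical fact that every complex Banach space has $n(X)\geq 1/e>0$. Nothing further is needed.
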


We do not know if the same statement holds for real spaces. However, we can conclude that it is true for 2-dimensional real Banach spaces. Indeed, if $X$ is 2-dimensional and $n(X)=0$, then $X$ is isometrically isomorphic to the 2-dimensional Hilbert space (see \cite[Theorem~3.1]{R}) and, consequently, it has the BPBpp-nu.

\begin{question} Finite dimensional spaces with $n(X)=0$ have the {\bf L}$_{p,p}$-nu?
\end{question}

Next, we consider the property on $c_0$ which is SSD and has numerical index 1. It is known that the pairs $(c_0, c_0)$ and $(c_0, X)$, where $X$ is $\C$-uniformly convex, satisfy the {\bf L}$_{p,p}$ (see \cite[Proposition 2.8 and Theorem 2.12]{DKLM}, respectively). In particular, the pair $(c_0, L_p(\mu))$ satisfies it for a positive measure $\mu$ and $1 \leq p < \infty$. Here, we have the following result.

\begin{theorem} \label{c_0} $c_0$ satisfies the {\bf L}$_{p,p}$-nu.	
\end{theorem}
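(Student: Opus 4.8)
The goal is to show that $c_0$ has the $\mathbf{L}_{p,p}$-nu property: given $\eps>0$ and a state $(x,x^*)\in\Pi(c_0)$, we must produce $\eta>0$ so that any $T$ with $v(T)=1$ and $|x^*(T(x))|>1-\eta$ can be perturbed by less than $\eps$ (in operator norm) to an operator $S$ with $v(S)=1$ and $|x^*(S(x))|=1$. Since $n(c_0)=1$, the condition $v(T)=1$ is the same as $\|T\|=1$, which simplifies bookkeeping. The first thing I would do is understand the structure of a state on $c_0$: if $(x,x^*)\in\Pi(c_0)$ with $x=(x_k)\in S_{c_0}$ and $x^*=(a_k)\in S_{\ell_1}$, then $\sum a_k x_k=1$ forces $a_k\ne 0\Rightarrow x_k = \operatorname{sgn}(a_k)$ (suitably interpreted over $\K$), and in particular $x$ attains its norm on the (possibly infinite) support $F$ of $x^*$. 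Because $x^*\in\ell_1$, for any $\delta>0$ we can find a finite set $F_0\subset F$ carrying all but $\delta$ of the $\ell_1$-mass of $x^*$; this reduces the essential part of the problem to finitely many coordinates, which is the standard mechanism behind SSD-type arguments on $c_0$.

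**Key steps.** First, fix $(x,x^*)$ and choose a finite $F_0\subseteq\operatorname{supp}(x^*)$ with $\sum_{k\notin F_0}|a_k|<\delta$, where $\delta=\delta(\eps)$ is to be calibrated at the end. Second, given $T$ with $\|T\|=v(T)=1$ and $|x^*(Tx)|>1-\eta$, write out $x^*(Tx)=\sum_k a_k (Tx)_k$; since $\|Tx\|_\infty\le 1$ and $\|x^*\|_1=1$, the hypothesis $|x^*(Tx)|>1-\eta$ forces, for each $k\in F_0$, that $(Tx)_k$ is close (within something like $\eta/\min_{k\in F_0}|a_k|$, plus a $\delta$-term) to the unimodular scalar $\lambda_k:=\operatorname{sgn}(a_k)\overline{\operatorname{sgn}(x^*(Tx))}$ that would make the inner product maximal. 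Third, define the perturbation $S$ by correcting $T$ only on the relevant finite-dimensional piece: roughly, $S:=T+R$ where $R=\sum_{k\in F_0} \big(\lambda_k - (Tx)_k\big)\, e_k^*(\,\cdot\,)\otimes e_k$, i.e. $R$ replaces the $k$-th coordinate of the image of the specific vector $x$ by $\lambda_k$ for $k\in F_0$ (one can phrase $R$ as $P_{F_0}\circ(\,\text{rank-one gadget}\,)$ to keep $\|R\|$ small). Then by construction $(Sx)_k=\lambda_k$ for $k\in F_0$, so $x^*(Sx)=\sum_{k\in F_0}a_k\lambda_k + \sum_{k\notin F_0}a_k(Tx)_k$; the first sum has modulus $\sum_{k\in F_0}|a_k|\ge 1-\delta$ and the tail is at most $\delta$ in modulus — but this only gives $|x^*(Sx)|$ close to $1$, not equal to $1$. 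To upgrade to an exact equality I would also zero out (or rescale) the tail: modify $S$ further so that $(Sx)_k=0$ for $k\notin F_0$ as well, or alternatively correct the finitely many coordinates in $F_0$ so that $\sum_{k}a_k(Sx)_k$ is exactly a unimodular scalar; since only finitely many coordinates are in play, this is a finite adjustment and can be done with an additional norm-$O(\delta)$ perturbation. Fourth, one must verify $v(S)=1$: the upper bound $v(S)\le\|S\|\le 1+O(\delta)$ is immediate and after a final normalization $S/\|S\|$ we still have $|x^*(S x /\|S\|)| \ge (1-O(\delta))$, and since the modulus cannot exceed $1$, it equals $1$ once we have arranged exact attainment — so actually the cleaner route is to first build $S_0$ with $|x^*(S_0 x)|=1$ exactly and $\|S_0-T\|$ small, then set $S:=S_0/v(S_0)$ and note $v(S_0)\ge |x^*(S_0x)|=1$ together with $v(S_0)\le\|S_0\|\le 1+O(\delta)$, so $v(S)=1$, $|x^*(Sx)|=|x^*(S_0x)|/v(S_0)=1/v(S_0)$... which forces $v(S_0)=1$ and $S=S_0$. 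In other words the normalization is essentially automatic once exact attainment at $(x,x^*)$ is achieved. Finally, collect the error terms: $\|S-T\|\le \|R\|+\|\text{tail correction}\| = O\big(\delta + \eta/\min_{k\in F_0}|a_k|\big)$, and choose $\delta$ then $\eta$ (the latter depending on $\eps$, on $F_0$, and hence on $(x,x^*)$ — which is exactly what $\mathbf{L}_{p,p}$-nu, as opposed to BPBpp-nu, permits) to make this $<\eps$.

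**Main obstacle.** The delicate point is not the finite-dimensional correction but controlling the $\ell_1$-tail of $x^*$ and, more subtly, arranging that the perturbed operator still has numerical radius exactly (not merely approximately) $1$ while genuinely attaining it at the prescribed state $(x,x^*)$ — a strict inequality somewhere in the chain would only give $v(S)$ close to $1$. The resolution is the two-stage construction above: first force exact attainment at $(x,x^*)$ via a finite-rank perturbation that fixes the image coordinates of the single vector $x$, absorbing the infinite tail of $x^*$ into the smallness budget $\delta$; then observe that normalizing by $v(S_0)$ is forced to be trivial. One should also double-check that the rank-one/finite-rank gadgets used for $R$ really have small operator norm — since they act by prescribing $(\,\cdot\,)_k$ on the image of a norm-one vector and the deviations $|\lambda_k-(Tx)_k|$ are small by the near-attainment hypothesis, this is fine, but it is where the explicit dependence of $\eta$ on $(x,x^*)$ (through $\min_{k\in F_0}|a_k|$ and the choice of $F_0$) enters, confirming that we get the local and not the uniform property — consistent with the fact, noted earlier in the paper, that $c_0$ fails BPBpp-nu.
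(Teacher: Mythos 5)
Your outline correctly identifies the finite‑dimensional reduction and the need for a two‑stage construction, but it has a genuine gap at the decisive step: you never actually secure $|x^*(S x)|=v(S)$. Having built $S_0$ with $|x^*(S_0x)|=1$ and $\|S_0\|\le 1+O(\delta)$, you assert that normalizing by $v(S_0)$ ``forces $v(S_0)=1$''; nothing forces this. All you know is $1\le v(S_0)\le 1+O(\delta)$, and if $v(S_0)>1$ then $S=S_0/v(S_0)$ satisfies $|x^*(Sx)|=1/v(S_0)<1=v(S)$, so $S$ does \emph{not} attain its numerical radius at $(x,x^*)$ --- which is exactly the conclusion you need. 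This is the central difficulty of the whole argument, and the paper's proof is built around resolving it: instead of correcting the image of the single vector $x$, it replaces the finitely many rows $e_{n_i}^*\circ T$ of $T$ (those indexed by the support of $x_0^*$) by $\bigl(1+\tfrac{\e}{4}\bigr)z_{n_i}^*$, where the $z_{n_i}^*$ are \emph{norm-one functionals attaining their norm at $x_0$} supplied by the finite-convex-sum characterization of strong subdifferentiability (Lemma~\ref{lemma1}). The deliberate overshoot by the factor $1+\tfrac{\e}{4}$ makes the modified rows strictly dominate all the untouched ones, so that $\|S\|=\sup_n\|S^*e_n^*\|=1+\tfrac{\e}{4}$ \emph{exactly}, while simultaneously $|x_0^*(Sx_0)|=1+\tfrac{\e}{4}$ exactly; only then does dividing by $1+\tfrac{\e}{4}$ produce an operator of numerical radius one attaining it at $(x_0,x_0^*)$.

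A second, smaller issue: your treatment of the ``$\ell_1$-tail'' of $x^*$ is incoherent for \emph{exact} attainment. If $\|S\|\le 1$ then $|x^*(Sx)|=1$ forces $(Sx)_k$ to be perfectly aligned with $\overline{a_k}$ on the \emph{entire} support of $x^*$, not merely on $F_0$; zeroing the tail or adjusting only the $F_0$-coordinates cannot achieve this if the support were infinite. The point you are missing is Lemma~\ref{lemma2} of the paper: since $x\in c_0$ and $x^*(x)=1=\|x^*\|_1$, the support of $x^*$ is automatically contained in the finite set $\{k: |x(k)|=1\}$, so there is no tail at all. With that observation your $F_0$ can be taken to be the whole support and the $\delta$-bookkeeping disappears; but without the norm-equality device above the proof still does not close.
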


In order to prove this theorem, we need two auxiliary results. The first one concerns a characterization of the strong subdifferentiability of the norm of a Banach space in terms of finite convex  sums, which was proved in \cite{DKLM2}. The second is a straightforward fact about functionals on $c_0$ which attain the norm and we include a proof for completeness.

\begin{lemma}(\cite[Proposition 3.2]{DKLM2}) \label{lemma1} For every Banach space $Y$, the following are equivalent.
\begin{itemize}
	\item[(a)] The norm of $Y$ is SSD.
	\vspace{0.1cm}
	\item[(b)]  For each $\e > 0$, $y \in S_Y$ and a finite sequence $(\alpha_j)_{j=1}^n$ such that $n>0$, $\alpha_j > 0$ for all $j$ and $\sum_{j=1}^n \alpha_j = 1$,  there exists $\eta = \eta(\e, (\alpha_j)_{j \in A}, y) > 0$ such that  whenever a sequence of functionals $(y_j^*)_{j \in A} \subset B_{Y^*}$ satisfies
	\begin{equation*}
		\re \sum_{j \in A} \alpha_j y_j^*(y) > 1 - \eta,	
	\end{equation*}
	there is $(z_j^*)_{j=1}^n \subset S_{Y^*}$ such that
	\begin{equation*}
		z_j^*(y) = 1 \ \ \ \mbox{and} \ \ \ \|z_j^* - y_j^*\| < \e	 \text{~for~all~} j=1,2,...,n.
	\end{equation*}	
\end{itemize}
\end{lemma}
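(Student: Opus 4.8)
The plan is to prove the two implications separately, using as the sole engine the single-functional characterization of strong subdifferentiability already recorded in the paper (the one from \cite[Theorem 2.3.(a)]{DKLM} quoted in the proof of Proposition~\ref{index_1-3}): the norm of $Y$ is SSD at $y \in S_Y$ if and only if for every $\e > 0$ there is $\eta_0 = \eta_0(\e, y) > 0$ such that every $y^* \in S_{Y^*}$ with $|y^*(y)| > 1 - \eta_0$ can be moved, within distance $\e$, to some $z^* \in S_{Y^*}$ attaining its norm at $y$. The key conceptual point is that condition (b) with $n = 1$ and $\alpha_1 = 1$ is essentially this very characterization, so the content of the lemma is that the single-functional statement self-improves into a statement about arbitrary finite convex combinations taken at a common point.

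For the implication (b)$\Rightarrow$(a) I would simply specialize (b) to $n = 1$, $\alpha_1 = 1$. Then the hypothesis $\re y_1^*(y) > 1 - \eta$ with $y_1^* \in B_{Y^*}$ and the conclusion $z_1^* \in S_{Y^*}$, $z_1^*(y) = 1$, $\|z_1^* - y_1^*\| < \e$ are exactly the single-functional SSD criterion (after the routine passage between $\re$ and modulus and between $B_{Y^*}$ and $S_{Y^*}$), so the norm is SSD at each $y \in S_Y$.

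For the main implication (a)$\Rightarrow$(b), fix $\e > 0$, $y \in S_Y$, and a convex combination $(\alpha_j)_{j=1}^n$ with $\alpha_j > 0$ and $\sum_j \alpha_j = 1$. Let $\eta_0 = \eta_0(\e, y)$ be furnished by SSD at $y$, set $\alpha_{\min} := \min_{1 \le j \le n} \alpha_j > 0$, and choose $\eta := \alpha_{\min}\, \eta_0$. The heart of the argument is the elementary averaging step: if $(y_j^*)_{j=1}^n \subset B_{Y^*}$ satisfies $\re \sum_j \alpha_j y_j^*(y) > 1 - \eta$, write $\delta_j := 1 - \re y_j^*(y) \ge 0$ (nonnegative because $\re y_j^*(y) \le \|y_j^*\| \le 1$); then
\[ \sum_{j=1}^n \alpha_j \delta_j = 1 - \re \sum_{j=1}^n \alpha_j y_j^*(y) < \eta, \]
so for each fixed $j$ one has $\alpha_j \delta_j \le \sum_k \alpha_k \delta_k < \eta$, whence $\delta_j < \eta/\alpha_j \le \eta/\alpha_{\min} = \eta_0$. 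Thus every individual functional already satisfies $\re y_j^*(y) > 1 - \eta_0$ at the \emph{same} point $y$, and applying the single-functional SSD criterion termwise (the same $y$, hence the same $\eta_0$, for all $j$) produces $z_j^* \in S_{Y^*}$ with $z_j^*(y) = 1$ and $\|z_j^* - y_j^*\| < \e$, which is precisely (b).

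I expect the main obstacle to be bookkeeping rather than conceptual: the averaging inequality is elementary, and the only care needed is in matching the exact form of the cited criterion to the hypotheses of (b). Concretely, (b) allows $y_j^* \in B_{Y^*}$ and uses $\re$, whereas the cited criterion is phrased on $S_{Y^*}$ with modulus. I would absorb the $B_{Y^*}$-versus-$S_{Y^*}$ gap by normalizing $y_j^*$ (its norm is at least $\re y_j^*(y) > 1 - \eta_0$, so the normalization moves it by at most $\eta_0$, and I would shrink $\eta_0$ in advance so that this extra error plus the $\e$ from the criterion stays below $\e$), and I would absorb the modulus-versus-real-part gap by a small rotation: since $|z_j^*(y)| = 1$ while $z_j^*(y)$ is close to $y_j^*(y)$, whose real part is near $1$, the phase of $z_j^*(y)$ is near $0$, so multiplying $z_j^*$ by the appropriate unimodular scalar makes it take the value $1$ at $y$ while changing it by an arbitrarily small amount. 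These adjustments are standard and cost only a harmless shrinking of constants.
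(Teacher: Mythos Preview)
The paper does not actually contain a proof of this lemma: it is stated with a citation to \cite[Proposition~3.2]{DKLM2} and then used as a black box in the proof of Theorem~\ref{c_0}. So there is no in-paper argument to compare your proposal against.

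That said, your proposed proof is correct and is the natural one. The implication (b)$\Rightarrow$(a) by specializing to $n=1$, $\alpha_1=1$ is immediate (the rotation/normalization passages you describe are exactly right). For (a)$\Rightarrow$(b), the averaging step is the whole point: writing $\delta_j = 1-\re y_j^*(y)\ge 0$ and using $\sum_j \alpha_j\delta_j<\eta$ together with $\alpha_j\ge\alpha_{\min}>0$ forces each $\delta_j<\eta/\alpha_{\min}$, so the single-functional SSD criterion applies termwise at the common point $y$. Your treatment of the $B_{Y^*}$-versus-$S_{Y^*}$ and $\re$-versus-modulus discrepancies (normalize first, then rotate the resulting $z_j^*$ to make $z_j^*(y)=1$, absorbing both errors by shrinking $\eta_0$ in advance) is the standard way to close the gap and works without difficulty.
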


\begin{lemma} \label{lemma2} Let $x^* = (x_i^*)_{i=1}^{\infty} \in S_{c_0^*}$ be a linear functional on $c_0$ and suppose that it attains the norm at $x_0 = (x(i))_{i=1}^{\infty} \in S_{c_0}$. If there is $j \in \N$ such that $0 < |x_0(j)| < 1$, then $x^*(e_j) = 0$, where $(e_i)_{i=1}^{\infty}$ is the canonical basis of $c_0$.	
\end{lemma}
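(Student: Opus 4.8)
The plan is to use the standard identification $c_0^* = \ell_1$, under which the functional $x^* = (x_i^*)_{i=1}^\infty$ acts on $y = (y(i))_i \in c_0$ by $x^*(y) = \sum_i x_i^* y(i)$, with $\|x^*\| = \sum_i |x_i^*| = 1$. Note that under this identification $x^*(e_j) = x_j^*$, so the statement to be proved is simply that $x_j^* = 0$.

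The key step is to read off the equality case in the obvious modulus estimate coming from norm attainment. Since $\|x_0\|_\infty = 1$ we have $|x_0(i)| \le 1$ for every $i$, and therefore
\begin{equation*}
1 = |x^*(x_0)| = \Bigl| \sum_i x_i^* \, x_0(i) \Bigr| \le \sum_i |x_i^*|\,|x_0(i)| \le \sum_i |x_i^*| = 1 .
\end{equation*}
Hence all the inequalities are in fact equalities; in particular $\sum_i |x_i^*|\,\bigl(1 - |x_0(i)|\bigr) = 0$. As this is a sum of non-negative terms, each term vanishes, i.e. for every $i \in \N$ either $x_i^* = 0$ or $|x_0(i)| = 1$.

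To finish, I would specialize to $i = j$: the hypothesis $0 < |x_0(j)| < 1$ excludes $|x_0(j)| = 1$, forcing $x_j^* = 0$, that is $x^*(e_j) = 0$, as claimed. There is essentially no obstacle in this argument; the only point worth a word is that over $\C$ one cannot argue via alignment of signs of the $x_i^*$, but this is not needed — the modulus estimate above already yields the required equality condition (equivalently, one may run the whole argument with the sequences $(|x_i^*|)_i \in \ell_1$ and $(|x_0(i)|)_i \in \ell_\infty$).
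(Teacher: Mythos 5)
Your proof is correct and is essentially the same as the paper's: both rest on the identification $c_0^*=\ell_1$ and the chain $1=|x^*(x_0)|\le\sum_i|x_i^*|\,|x_0(i)|\le\sum_i|x_i^*|=1$, the paper phrasing it as a contradiction (a nonzero $x_j^*$ with $|x_0(j)|<1$ would make the middle inequality strict) while you read off the equality case directly. The difference is purely presentational.
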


\begin{proof} Suppose that $0 < |x_0(j)| < 1$ and that $x^*(e_j) = x_j^* \not= 0$. Then,
\begin{equation*}
1 = \|x^*\|_1 = |x^*(x_0)| = \left| \sum_i x^*_i x_0(i) \right| <  \sum_{i \not= j} |x_i^*| + |x_j^*| = \|x^*\|_1 = 1,	
\end{equation*}
which is a contradiction.
\end{proof}

\begin{proof} [Proof of Theorem \ref{c_0}]

Denote the canonical basis of $c_0$ and $\ell_1$  by $(e_i)$ and $(e_i^*)$, respectively. Let $(x_0, x_0^*) \in \Pi(c_0)$ be given. We write $x_0 = (x_0(i))_{i=1}^{\infty}=\sum_{i=1}^{\infty} x_0(i)e_i\in S_{c_0}$. Since $x_0 \in S_{c_0}$, there is a finite collection $A=\{n_1, \ldots, n_m\} \subset \N$ such that $|x_0(n_i)|=1$ for $i=1, \ldots, m$. By Lemma \ref{lemma2}, we have 
\begin{equation*}
x_0^* = \alpha_1 e_{n_1}^* + \cdots + \alpha_{m} e_{n_m}^* \ \ \mbox{with} \ \ \|x_0^*\|= \sum_{i=1}^m |\alpha_i| = 1.	
\end{equation*}
We may suppose that all $\alpha_i \not= 0$ and notice that
\begin{eqnarray*}
|\alpha_1| |x_0(n_1)| + \cdots + |\alpha_m| |x_0(n_m)| &=& |\alpha_1| + \cdots + |\alpha_m| \\
&=& 1 \\
&=& x_0^*(x_0) \\
&=& \alpha_1 x_0(n_1) + \cdots + \alpha_m x_0 (n_m)	
\end{eqnarray*}
This implies that $\alpha_i x_0 (n_i) = |\alpha_i x_0 (n_i)|$ for every $i = 1, \ldots, m$. Let $\e>0$ and $0<\xi < \frac{\e}{4}$ be given. Since $c_0$ is SSD, using $\eta$ of Lemma \ref{lemma1}, we consider 
\begin{equation*}
\eta := \eta(\xi, (|\alpha_i|)_{i=1}^m, x_0) > 0.
\end{equation*}
Let $T \in \mathcal{L}(c_0)$ with $v(T) = \|T\| = 1$ be such that $|x_0^*(T(x_0))| > 1 - \eta$. For a suitable modulus $1$ scalar $r$ and $y_{n_i}^* = \frac{\alpha_i}{|\alpha_i|} r e_{n_i}^* \ \mbox{for} \ i = 1, \ldots, m$, we have
\begin{equation*}
|\alpha_1| y_{n_1}^*(T(x_0)) + \cdots + |\alpha_m| y_{n_m}^*(T(x_0))=|x_0^*(T(x_0))|>1-\eta
\end{equation*}
Then, by Lemma \ref{lemma1}, there is $(z_{n_i}^*)_{i=1}^m \subset S_{c_0^*}$ such that $z_{n_i}^*(x_0) = 1$ and $\|z_{n_i}^* - y_{n_i}^*\circ T \| < \xi$ for $i = 1, \ldots, m$. Now, define $S \in \mathcal{L}(c_0)$ by
\begin{equation*}
S(x) := T(x) + \sum_{i=1}^m \left[ \left( 1 + \frac{\e}{4} \right) z_{n_i}^*(x) - y_{n_i}^*(T(x)) \right] y_{n_i}
\end{equation*}
where $y_{n_i} := \frac{\overline{\alpha_i}}{|\alpha_i|}\overline{r} e_{n_i} \ \mbox{for} \ i=1,\ldots, m$ and $x \in c_0$.
Then, it is clear that
\begin{equation*}
S^*(y^*) = T^*(y^*) + \sum_{i=1}^m y^*(y_{n_i}) \left[ \left( 1 + \frac{\e}{4} \right) z_{n_i}^* - T^*(y_{n_i}^*) \right] \ \ \ (y^* \in c_0^*).
\end{equation*}
 We have that if $n \not= n_1, \ldots, n_m$, then $\|S^*(e_{n}^*)\| = \|T^*(e_n^*)\| \leq 1$. On the other hand, if $n = n_j$ for some $j = 1, \ldots, m$, then
\begin{equation*}
\|S^*(e_{n_j}^*)\| = \|S^*(y_{n_j}^*)\| = \left\| T^*(y_{n_j}^*) + \left( 1 + \frac{\e}{4} \right) z_{n_j}^* - T^*(y_{n_j}^*) \right\| = 1 + \frac{\e}{4}.	
\end{equation*}
Since $v(S) = \|S\| = \|S^*\| = \sup_n \|S^*(e_n^*)\|$, we have that $v(S)=1 + \frac{\e}{4}$. Moreover, we have that \begin{eqnarray*}
|x_0^*(S(x_0))| &=& \left| x_0^*(T(x_0)) + \sum_{i=1}^{m} \left[ \left(1 + \frac{\e}{4} \right) z_{n_i}^*(x_0) - y_{n_i}^*(T(x_0)) \right] x_0^*(y_{n_i}) \right| \\
&=& \left| x_0^*(T(x_0)) + \overline{r}\sum_{i=1}^m |\alpha_i| \left( 1 + \frac{\e}{4} \right) - \overline{r}\sum_{i=1}^m |\alpha_i| y_{n_i}^*(T(x_0)) \right| \\
&=& \left| \sum_{i=1}^m \alpha_i e_{n_i}^*(T(x_0))  + \overline{r}\sum_{i=1}^m |\alpha_i| \left( 1 + \frac{\e}{4} \right) - \sum_{i=1}^m \alpha_i e_{n_i}^*(T(x_0)) \right| \\
&=& 1 + \frac{\e}{4}.
\end{eqnarray*}
Then, $v(S) = \|S\| = |x_0^*(S(x_0))| = 1 + \frac{\e}{4}$. Hence, the inequality
 \begin{equation*}
 	\left\|\frac{S}{1 + \frac{\e}{4}} - T\right\| \leq \left\|\frac{S}{1 + \frac{\e}{4}} - S\right\|+\left\|S - T\right\|<  \frac{\e}{4}+\xi + \frac{\e}{4} < \e
 \end{equation*}
shows that $\ds \frac{S}{1 + \frac{\e}{4}} \in \mathcal{L}(X)$ is the desired operator.
\end{proof}

\begin{rem} \label{l_1} Theorem \ref{c_0} shows that property {\bf L}$_{p,p}$-nu is strictly weaker than the BPBpp-nu since $c_0$ fails to have the BPBpp-nu (see Proposition \ref{index_1-1}). Concerning $\ell_1$, since the numerical index of $\ell_1$ is 1 and $\ell_1$ is {\it not} SSD, it cannot have property {\bf L}$_{p,p}$-nu by Proposition \ref{index_1-3}. Moreover, we can also notice that the denseness of numerical radius attaining operators does not imply property {\bf L}$_{p,p}$-nu since $\overline{\NRA(\ell_1)} = \mathcal{L}(\ell_1)$ (see \cite{C2}) but $\ell_1$ fails property {\bf L}$_{p,p}$-nu. On the other hand, we do not know whether {\bf L}$_{p,p}$-nu implies the denseness of numerical radius attaining operators or not.
	
\begin{question} Let $X$ be a Banach space with the {\bf L}$_{p,p}$-nu. Is it true that $\overline{\NRA(X)} = \mathcal{L}(X)$?
\end{question}	
	 
\end{rem} 

In view of Theorems~\ref{finite} and \ref{c_0}, it is natural to ask whether the strong subdifferentiability of the norm  is a sufficient condition for the validity of property {\bf L}$_{p,p}$-nu. We show that this is not the case by exhibiting the following counterexample. Let $\Z$ be the space $c_0$ equipped with the equivalent strictly convex norm
$$
\|x\|_{\mathcal{Z}}=\|x\|_\infty +\left(\sum_{i=1}^\infty \frac{|x(i)|^2}{2^i} \right)^{1/2}.
$$
This space appears in classical counterexamples for norm attaining results. In \cite{Lind} it is proved that the set $\NA (c_0, \mathcal{Z})$ is not dense in $\mathcal{L}(c_0,\mathcal{Z})$, where $\NA (c_0, \mathcal{Z})$ is the set of all norm attaining operators from $c_0$ to $\mathcal{Z}$. Indeed, if $T \in \NA (c_0, \mathcal{Z})$ then there is $n_0\in \mathbb{N}$ such that $T(e_n)=0$ for all $n\geq n_0$. Hence, the formal identity $id\colon c_0 \to \mathcal{Z}$ cannot be approximated by norm attaining operators. As a consequence, it is also shown in \cite{Lind} that, if we take $X=c_0\oplus_\infty \Z$, then $\NA (X, X)$ is not dense in $\mathcal{L}(X,X)$. When looking at the denseness of numerical radius attaining operators, the space $X=c_0\oplus_\infty \Z$ appears as a natural candidate to show that $\NRA (X, X)$ is not dense in $\mathcal{L}(X,X)$ and, indeed, this is the main result in \cite{P}. Before showing the desired counterexample, we observe two facts.

\begin{remark}\rm
The norm of the Banach space $c_0\oplus_\infty\Z$ is strongly subdifferentiable.
\end{remark}
\begin{proof}
Since $c_0$ is SSD, by \cite[Proposition~2.2]{FP} it suffices to show that $\Z$ is SSD. We want to prove that, for each $z\in S_{\Z}$, the one-side limit 
$
\lim_{t\to 0^+} \frac{\|z+th\|_{\Z}-1}{t}
$
exists uniformly for $h\in B_{\Z}$. Consider $\tilde{z}=\left(\frac{z(i)}{\sqrt{2}^i}\right)$ and $\tilde{h}=\left(\frac{h(i)}{\sqrt{2}^i}\right)$ and note that
$$
\frac{\|z+th\|_{\Z}-1}{t}=\underbrace{\frac{\|z+th\|_{\infty}-\|z\|_\infty}{t}}_{(I_t)} + \underbrace{\frac{\|\tilde{z}+t\tilde{h}\|_{2}-\|\tilde{z}\|_2}{t}}_{(II_t)}.
$$
On the one hand, since $z\in c_0$ and $(c_0, \|\cdot\|_\infty)$ is SSD, we have that $\lim_{t\to 0^+} (I_t)$ exists uniformly for $h\in B_{\Z}$ (note that if $h\in B_{\Z}$, then $h\in B_{c_0}$). On the other hand, since $\tilde{z} \in \ell_2$ and $(\ell_2, \|\cdot\|_2)$ is SSD, then $\lim_{t\to 0^+} (II_t)$ exists uniformly for $h\in B_{\Z}$ (again, note that if $h\in B_{\Z}$, then $\tilde{h}\in B_{\ell_2}$). Then, $\lim_{t\to 0^+} \frac{\|z+th\|_{\Z}-1}{t}$
exists uniformly for $h\in B_{\Z}$.
\end{proof}

\begin{remark}\label{1}\rm
Suppose that a Banach space $X$ has the {\bf L}$_{p,p}$-nu. Then, given $\e>0$ and $(x,x^*)\in \Pi(X)$, there is $\tilde{\eta}(\e, (x,x^*))>0$ such that, whenever $T\in \mathcal{L}(X)$ with $v(T)\leq \|T\|\leq 1$ satisfies
$$
|x^*(T(x))|>1-\tilde{\eta}(\e, (x,x^*)),
$$
there is $S\in \mathcal{L}(X)$ with $v(S)=1$ such that $|x^*(S(x))|=1$ and $\|S-T\|<\e$. Notice that the difference with the property {\bf L}$_{p,p}$-nu is that we are considering the initial operator $T$ in the ball of the space, instead of considering $T$ such that $v(T)=1$. Indeed, let $0<\e<1$ and $0<\eta(\frac{\e}{2}, (x,x^*))<\e/2$ as in the definition of the {\bf L}$_{p,p}$-nu. Put $\tilde{\eta}(\e, (x,x^*))=\frac{\eta(\frac{\e}{2}, (x,x^*))}{2}$ and suppose $T\in \mathcal{L}(X)$ is such that $v(T)\leq \|T\|\leq 1$ and $|x^*(T(x))|>1-\tilde{\eta}(\e, (x,x^*))$. Then, if we consider $T_1=\frac{T}{v(T)}$, we have $v(T_1)=1$ and 
$$
|x^*(T_1(x))|=\frac{1}{v(T)}|x^*(T(x))| >\frac{1-\tilde{\eta}(\e, (x,x^*))}{v(T)}\geq 1-\tilde{\eta}(\e, (x,x^*)).
$$
By hypothesis, there is $S\in \mathcal{L}(X)$ with $v(S)=1$ such that $|x^*(S(x))|=1$ and $\|S-T_1\|<\e/2$. Then, 
\begin{eqnarray*}
\|S-T\|&\leq& \|S-T_1\|+\|T_1-T\| < \frac{\e}{2} + \frac{\|T\|}{v(T)} (1-v(T))\\
&<&  \frac{\e}{2} + \frac{\|T\|}{v(T)} \tilde{\eta}(\e, (x,x^*)) <  \frac{\e}{2} + \frac{\|T\|}{v(T)} \frac{\e}{4}.
\end{eqnarray*}
Since $\|T\|\leq1$ and $v(T)^{-1}\leq 2$ we deduce $\|S-T\|<\e$, which is the desired statement.
\end{remark}

\begin{theorem}\label{counterexample}
The space $X=c_0\oplus_\infty \Z$ is SSD and fails the  {\bf L}$_{p,p}$-nu.
\end{theorem}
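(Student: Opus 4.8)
\medskip
\noindent\emph{Proof plan.}
The strong subdifferentiability of the norm of $X=c_0\oplus_\infty\Z$ is precisely the content of the Remark above, so the remaining task is to show that $X$ fails the {\bf L}$_{p,p}$-nu, and I would argue by contradiction. Assuming $X$ has the {\bf L}$_{p,p}$-nu, Remark~\ref{1} allows me to apply it to initial operators with merely $\|T\|\le 1$. I would write $\mathcal{L}(X)$ as $2\times 2$ operator matrices with entries $S_{11}\in\mathcal{L}(c_0)$, $S_{12}\in\mathcal{L}(\Z,c_0)$, $S_{21}\in\mathcal{L}(c_0,\Z)$, $S_{22}\in\mathcal{L}(\Z)$, and $X^{*}=\ell_1\oplus_1\Z^{*}$; note that a state $\big((u,z),(f,g)\big)\in\Pi(X)$ with $f,g\neq 0$ forces $\|u\|_\infty=\|z\|_\Z=1$ with $f/\|f\|_1$ norming $u$ and $g/\|g\|_{\Z^{*}}$ norming $z$. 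Two classical facts drive the argument: (i) the formal identity $j\colon c_0\to\Z$ is not a limit of norm attaining operators, because --- using the strict convexity of $\Z$ --- any $W\in\mathcal{L}(c_0,\Z)$ attaining its norm at $x\in S_{c_0}$ satisfies $We_k=0$ for every $k$ outside the finite set $\{i:|x(i)|=1\}$; and (ii) the $\ell_\infty$-sum makes this corner visible to $v(\cdot)$, since evaluating at states of the form $\big((u,Wu/\|Wu\|_\Z),(0,g)\big)$ with $g$ norming $Wu/\|Wu\|_\Z$ shows that an operator whose only nonzero corner is $S_{21}=W$ has $v=\|W\|$, and in general $v(S)\ge\|S_{21}\|-\|S_{22}\|$. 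This is why $c_0\oplus_\infty\Z$, rather than $\Z$ itself, is the space to look at.

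I would then fix a state $(x_0,x_0^{*})\in\Pi(X)$ and a bounded sequence $(T_n)\subset\mathcal{L}(X)$, with $\|T_n\|\le 1$, which is not norm--Cauchy, whose relevant corner mimics a normalization of $j$ on a tail of coordinates in the spirit of the Lindenstrauss and Pay\'a examples, and for which $|x_0^{*}(T_n x_0)|\to 1$. Applying the {\bf L}$_{p,p}$-nu to $T_n$ (with a small $\e$) produces $S_n$ with $v(S_n)=1$, $|x_0^{*}(S_n x_0)|=1$ and $\|S_n-T_n\|<\e$. The plan is that combining the identity $|x_0^{*}(S_n x_0)|=1$ with the block estimates above (which pin down the norms of the corners of $S_n$ in terms of $v(S_n)=1$ and the corners of $T_n$) forces a corner of $S_n$, viewed as an operator $c_0\to\Z$, to \emph{almost} attain its norm at a vector whose large coordinates are prescribed in advance by the choice of $(x_0,x_0^{*})$. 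A quantitative (stable) version of Lindenstrauss's rigidity would then say that this corner is vanishingly small on the prescribed tail of coordinates, contradicting $\|S_n-T_n\|<\e$ once $\e$ is small enough, and completing the contradiction.

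The main obstacle, and where essentially all the work lies, is the stable form of Lindenstrauss's rigidity: strict convexity of $\Z$ gives the vanishing only in the exact case $\|Wu\|_\Z=\|W\|$, and for a general strictly --- but not uniformly --- convex space the hypothesis $\|Wu\|_\Z\ge\|W\|-\delta$ says nothing about the tail of $W$. I would exploit the explicit formula $\|x\|_\Z=\|x\|_\infty+\bigl(\sum_i|x(i)|^2/2^i\bigr)^{1/2}$: for a coordinate $k$ lying outside the support of the large part of $u$ one has $\|u\pm te_k\|_\infty=1$ for $|t|\le 1$, hence the convex function $t\mapsto\|Wu+tWe_k\|_\Z$ is almost flat near $0$; splitting it into the (flat) $\ell_\infty$-summand and the (Hilbertian, hence uniformly convex) weighted-$\ell_2$ summand, the near--flatness forces $\|We_k\|_{w,2}$ to be of order $\sqrt{\delta}$ --- unless $We_k$ is coordinatewise proportional to $Wu$, a degeneracy that I would rule out by arranging $Wu$ to be concentrated near a fixed finite set while $We_k$ is concentrated near coordinate $k$. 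The delicate part is to carry out this bookkeeping simultaneously with the requirements $\|T_n\|\le 1$, $(T_n)$ not norm--Cauchy, and $|x_0^{*}(T_n x_0)|\to 1$.
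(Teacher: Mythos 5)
Your overall strategy --- argue by contradiction at a fixed state, use Remark~\ref{1} to start from $\|T\|\le 1$, and exploit a Lindenstrauss/Pay\'a-type operator whose $c_0\to\Z$ corner cannot be approximated by ``rigid'' operators --- is the same as the paper's. But the step you yourself flag as ``where essentially all the work lies'' is a genuine gap, and it is precisely the step the paper avoids. The {\bf L}$_{p,p}$-nu hands you an operator $S$ with $v(S)=1$ that attains its numerical radius \emph{exactly} at $(x_0,x_0^*)$; the problem is transferring this exact attainment to the corner $S_{21}\in\mathcal{L}(c_0,\Z)$. The paper does this with two exact results of Pay\'a \cite{P}: Lemma~1.2 there gives $v(S)=\max\{v(PS),v(QS)\}$ for the $\ell_\infty$-sum, so (since the model operator $T$ maps into the $\Z$-summand, whence $\|QS\|<\e$) the part $PS=Az+By$ itself attains its numerical radius; and Proposition~2.4 there says that such a numerical-radius-attaining operator with $\|A\|<\|B\|$ must satisfy $e_n^*(Be_n)\to 0$. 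Your substitute --- ``the corner almost attains its norm, now apply a quantitative Lindenstrauss rigidity'' --- is both unproved and, as sketched, aimed at the wrong quantity. The convexity/near-flatness argument on $t\mapsto\|Wu+tWe_k\|_\Z$ can at best force the weighted-$\ell_2$ part $\bigl(\sum_i|(We_k)(i)|^2/2^i\bigr)^{1/2}$ to be $O(\sqrt\delta)$; but this seminorm is exponentially weaker than $\|\cdot\|_\Z$ in the $k$-th coordinate (e.g.\ $\|e_k\|$ in the weighted-$\ell_2$ part is $2^{-k/2}$), so smallness there is perfectly compatible with $(We_k)(k)=\tfrac12$ for large $k$ and yields no contradiction with $\|S-T\|<\e$. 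The paper's contradiction lives in the single diagonal coordinate $e_{m_j}^*(Be_{m_j})$, which is seen by the sup-norm summand of $\|\cdot\|_\Z$, not by the Hilbertian one; your sketch never controls it (and the ``coordinatewise proportional'' degeneracy you mention is also left unresolved).

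The second gap is the construction of the test operators, which you defer to ``delicate bookkeeping.'' The paper's choice is quite specific and the constraints interact: the state is $x_0=(e_1,z_0)$ with $x_0^*=(0,z_0^*)$ supported entirely on the $\Z$-summand (your discussion of states with both components of the functional nonzero is not the relevant case), and the operator $T$ has $e_{m_j}^*(Te_{m_j})=\tfrac12$ on a sparse set $m_1<m_2<\cdots$ chosen so that $2^{-(m_j+2)}\le |z_0(N+j)|^2\,2^{-(N+j)}$; this tuning of the ``bad tail'' to the decay of $z_0$ is exactly what makes $\|T\|\le 1$ compatible with $x_0^*(Tx_0)>1-\eta$. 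Also, disproving the local property does not require the family $(T_n)$ to be non-norm-Cauchy; what is needed is that for a fixed $\e_0$ and the fixed state, each $T$ with $x_0^*(Tx_0)>1-\eta$ admits no numerical-radius-one operator within $\e_0$ attaining at that state. Without Pay\'a's splitting lemma and his diagonal-vanishing proposition (or complete proofs of usable substitutes), the proposal does not close.
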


\begin{proof}
Assume that $X$ has the {\bf L}$_{p,p}$-nu and fix $z_0 \in S_{\Z}$ such that $z_0(1)\geq z_0(2)\geq \cdots >0$. Note that $z_0(1)\geq \frac{1}{2}$, otherwise, we would have
$$
\|z_0\|_{\Z}=\|z_0\|_\infty+\left(\sum_{i=1}^\infty \frac{|z_0(i)|^2}{2^i} \right)^{1/2} < \frac{1}{2} + \frac{1}{2}\left(\sum_{i=1}^\infty \frac{1}{2^i} \right)^{1/2}=1.
$$
For $z_0^* \in S_{\Z^*}$ so that $z_0^*(z_0)=1$, let $x_0=(e_1, z_0) \in S_{c_0\oplus_\infty \Z}$ where $e_i$ is the canonical basis of $c_0$ and $x_0^*\in S_{(c_0\oplus_\infty \Z)^*}$ be a functional such that $
x_0^*(y,z)= z_0^*(z)$ for arbitrary $(y,z)\in c_0 \oplus_\infty \Z$.

It is clear that $(x_0, x_0^*)\in \Pi(X)$ and, by hypothesis, given $0<\e<\frac{1}{2}$ we have $\eta(\e,(x_0, x_0^*))$ which is written in Remark~\ref{1} as $\tilde{\eta}(\e,(x_0, x_0^*))$. Then, we can take $N$ such that 
$$
z_0^*((z_0(1), \dots, z_0(N), 0, \dots))>1-\eta(\e, (x_0,x_0^*)),
$$
and (once we fix $N$) we can also choose a sequence $N<m_1<m_2<\cdots$ such that
\begin{equation}\label{subseq}
\frac{1}{2^{m_1+2}}\leq \frac{|z_0(N+1)|^2}{2^{N+1}}, \quad \frac{1}{2^{m_2+2}}\leq \frac{|z_0(N+2)|^2}{2^{N+2}}, \dots.
\end{equation}
Let $T\colon X\to X$ be the operator defined by for $y\in c_0$ and $z\in \Z$ 
\begin{equation*} 
T(y,z)= \left(0,\left(y(1)z_0(1), y(1)z_0(2), \dots, y(1)z_0(N), 0, \dots, 0, \underbrace{\frac{y(m_1)}{2}}_{\text{$m_1$-th coord.}}, 0, \dots, 0,  \underbrace{\frac{y(m_2)}{2}}_{\text{$m_2$-th coord.}}, 0, \dots\right)\right)
\end{equation*} 
and let us show that $\|T\|\leq 1$. Note that for each $y\in B_{c_0}$ and $z\in B_{\Z}$ 
\begin{eqnarray*}
\|T(y,z)\|_{X}&=&\left\|  \left(y(1)z_0(1), y(1)z_0(2), \dots, y(1)z_0(N), 0, \dots, 0, \frac{y(m_1)}{2}, 0, \dots, 0,  \frac{y(m_2)}{2}, 0, \dots\right)  \right\|_{\Z}\\
&\leq& \left\|\left(z_0(1), \dots, z_0(N), 0, \dots, 0, \frac{y(m_1)}{2}, 0, \dots\right)\right\|_\infty + \left( \sum_{i=1}^N \frac{|z_0(i)|^2}{2^i} + \sum_{j=1}^\infty \frac{|y(m_j)|^2}{2^{m_j+2}}\right)^{1/2},
\end{eqnarray*}
where the inequality is due to the fact that $|y(1)|\leq 1$. Since $z_0(1)\geq \frac{1}{2}$ and $|y(m_j)|\leq 1$, it is clear that
$$
\|T(y,z)\|_{X} \leq \|z_0\|_\infty + \left(  \sum_{i=1}^N \frac{|z_0(i)|^2}{2^i} + \sum_{j=1}^\infty \frac{1}{2^{m_j+2}}\right)^{1/2}
$$
and, by \eqref{subseq}, we deduce that
$$
\|T(y,z)\|_{X} \leq \|z_0\|_\infty + \left(\sum_{i=1}^\infty \frac{|z_0(i)|^2}{2^i} \right)^{1/2} = \|z_0\|_{\Z}=1.
$$
Since $x_0^*(T(x_0))>1-\eta(\e, (x_0,x_0^*))$, by hypothesis there is an operator $S\colon X\to X$, $v(S)=1$, such that
$$
x_0^*(S(x_0))=1\quad \text{and}\quad \|T-S\|<\e.
$$
Let $P$ and $Q$ be the projections from $X$ onto $\Z$ and $c_0$, respectively. It is clear that  $PT=T$ and $\|PT-PS\|<\varepsilon$.  Also, by \cite[Lemma~1.2]{P} we have that $v(S)=\max\{v(PS), v(QS)\}=1$ and, since $v(QS)\leq \|QS\|<\varepsilon$, we deduce that $PS$ attains its numerical radius. Hence, we may consider $U=PS$. Following the ideas in \cite{P} (see equation (4) in there) we have 
$$
U(y,z)=Az+By 
$$
with $A\in \mathcal{L}(\Z)$ and $B\in \mathcal{L}(c_0,\Z)$ and $\|A\|<\|B\|$ (this last inequality is due to the fact that $S$ is close to $T$). Then, by \cite[Proposition~2.4]{P} we have $\lim_{n\to \infty}e_n^*(B(e_n))=0$ for the canonical basis $(e_i^*)$ of $\ell_1$. Naming $\tilde{T}\colon c_0\to \Z$ to the operator 
$$
\tilde{T}(y)=\left(y(1)z_0(1), y(1)z_0(2), \dots, y(1)z_0(N), 0, \dots, 0, \frac{y(m_1)}{2}, 0, \dots, 0,  \frac{y(m_2)}{2}, 0, \dots\right),
$$
we have $\|\tilde{T}-B\|<\e$ and $e_{m_j}^*(\tilde{T}(e_{m_j}))=\frac{1}{2}$ for all $j\in \mathbb{N}$. Consequently,
$$
\frac{1}{2} = \lim_{j\to \infty} |e_{m_j}^*((\tilde{T}-B)(e_{m_j}))|\leq \e,
$$
which is the desired contradiction.
\end{proof}

We finish the study of property {\bf L}$_{p,p}$-nu by pointing out the immediate fact that a reflexive Banach space $X$ satisfies property {\bf L}$_{p,p}$-nu if and only if its dual $X^*$ satisfies it. This is deduced from the fact that $x^*(Tx)=\hat{x}(T^*x^*)$ and $(T^*)^*=T$ and  $(x,x^*)\in \Pi(X,X^*)$ if $(x, x^*) \in \Pi(X)$, where $\hat{~}:~X \rightarrow X^{**}$ is the canonical isometric inclusion (see \cite[Proposition 3]{KLM}). Notice that this is no longer true if we remove the hypothesis of $X$ being reflexive. Indeed, $c_0$ satisfies the {\bf L}$_{p,p}$-nu by Theorem \ref{c_0} but $\ell_1$ does not by Remark \ref{l_1}.

\begin{prop}A reflexive Banach space $X$ has {\bf L}$_{p,p}$-nu if and only if $X^*$ has {\bf L}$_{p,p}$-nu.
\end{prop}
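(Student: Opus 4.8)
The plan is to establish the equivalence by exploiting the duality relation between an operator $T$ and its adjoint $T^*$, together with the fact that for a reflexive space the bidual identification $X^{**}=X$ collapses the distinction between $(x,x^*)\in\Pi(X)$ and $(x^*,\hat{x})\in\Pi(X^*)$. The key algebraic identity is that, under the canonical inclusion $\hat{\ }:X\to X^{**}$, one has $x^*(Tx)=\hat{x}(T^*x^*)$ for every $T\in\mathcal{L}(X)$, $x\in X$, $x^*\in X^*$; combined with the well-known fact (recalled in the excerpt, citing \cite[Proposition 3]{KLM}) that $v(T)=v(T^*)$, this shows that $T$ almost attains its numerical radius at a state $(x,x^*)$ precisely when $T^*$ almost attains its numerical radius at the state $(x^*,\hat{x})\in\Pi(X^*)$, with the same numerical quantities. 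Since $X$ is reflexive, $\hat{x}$ ranges over all of $X^{**}$ as $x$ ranges over $X$, so every state of $X^*$ arises this way.

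By symmetry it suffices to prove one implication, say that {\bf L}$_{p,p}$-nu for $X$ implies {\bf L}$_{p,p}$-nu for $X^*$; the reverse follows by replacing $X$ with $X^*$ and using $(T^*)^*=T$ and reflexivity again. So, given $\e>0$ and a state $(x^*,\varphi)\in\Pi(X^*)$, reflexivity lets us write $\varphi=\hat{x}$ for a unique $x\in S_X$, and then $(x,x^*)\in\Pi(X)$. Apply the {\bf L}$_{p,p}$-nu of $X$ to obtain $\eta=\eta(\e,(x,x^*))>0$ witnessing the property at this state. I claim the same $\eta$ works for $X^*$ at the state $(x^*,\hat{x})$. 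Indeed, suppose $R\in\mathcal{L}(X^*)$ has $v(R)=1$ and $|\hat{x}(R(x^*))|>1-\eta$. Here one must be slightly careful: $R$ need not itself be an adjoint operator. However, since $X$ is reflexive, $\mathcal{L}(X^*)$ is canonically identified with $\mathcal{L}(X)$ via $R\mapsto R_*$ where $R_*$ is the (pre)adjoint, characterized by $(R_*)^*=R$; this identification is isometric and preserves numerical radius. Thus $T:=R_*\in\mathcal{L}(X)$ satisfies $v(T)=v(R)=1$ and $|x^*(T(x))|=|\hat{x}(T^*(x^*))|=|\hat{x}(R(x^*))|>1-\eta$. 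By the {\bf L}$_{p,p}$-nu of $X$, there is $S\in\mathcal{L}(X)$ with $v(S)=1$, $|x^*(S(x))|=1$, and $\|S-T\|<\e$. Setting $W:=S^*\in\mathcal{L}(X^*)$, we get $v(W)=v(S)=1$, $|\hat{x}(W(x^*))|=|x^*(S(x))|=1$, and $\|W-R\|=\|S^*-(R_*)^*\|=\|S-R_*\|=\|S-T\|<\e$, which is exactly what {\bf L}$_{p,p}$-nu for $X^*$ at $(x^*,\hat{x})$ requires.

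The only genuinely delicate point — the main obstacle, though a mild one — is the passage from an arbitrary $R\in\mathcal{L}(X^*)$ to a preadjoint $T\in\mathcal{L}(X)$ with $T^*=R$; this is where reflexivity of $X$ is essential, since in general not every operator on a dual space is weak$^*$-continuous and hence not every such operator is an adjoint. Under reflexivity, however, the weak and weak$^*$ topologies on $X^*$ coincide, so every bounded operator on $X^*$ is automatically weak$^*$-to-weak$^*$ continuous and therefore equals $T^*$ for $T=R^*$ (viewing $R^*\in\mathcal{L}(X^{**})=\mathcal{L}(X)$). Once this identification is in hand, everything else is the routine bookkeeping of transporting states and operators back and forth through the canonical identifications, using only $x^*(Tx)=\hat{x}(T^*x^*)$, $v(T)=v(T^*)$, $\|T\|=\|T^*\|$, and $(T^*)^*=T$. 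The final sentence noting that reflexivity cannot be dropped — witnessed by $c_0$ (Theorem~\ref{c_0}) versus $\ell_1$ (Remark~\ref{l_1}) — is already included in the excerpt and needs no further argument.
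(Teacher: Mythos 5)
Your proposal is correct and follows essentially the same route as the paper, which merely sketches the argument via the identities $x^*(Tx)=\hat{x}(T^*x^*)$, $(T^*)^*=T$, and the transfer of states $(x,x^*)\in\Pi(X)\mapsto(x^*,\hat{x})\in\Pi(X^*)$; you simply fill in the details, including the (correct) observation that reflexivity guarantees every $R\in\mathcal{L}(X^*)$ is an adjoint.
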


We now study property {\bf L}$_{o,o}$-nu which, as we will see, turns out to be much more restrictive than property {\bf L}$_{p,p}$-nu. We are starting with finite dimensional Banach spaces. Let us notice that if $\dim(X) < \infty$ and $X$ fails to have {\bf L}$_{o,o}$-nu, we get a contradiction by using the compactness of the unit balls of $X$ and $X^*$. Thus, we have the following result.

\begin{prop}\label{fdim} 
Let $X$ be a finite dimensional Banach space. Then, $X$ has {\bf L}$_{o,o}$-nu.
\end{prop}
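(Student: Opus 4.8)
The plan is to argue by contradiction, exploiting that in finite dimensions the relevant quantities are continuous on a compact set, so that near-maximizers of the numerical radius must cluster at genuine maximizers. Concretely, I would assume $X$ fails $\mathbf{L}_{o,o}$-nu; unwinding Definition~\ref{def2}(ii), this yields $\e_0 > 0$ and an operator $T \in \mathcal{L}(X)$ with $v(T) = 1$, together with, for each $n \in \N$, a state $(x_n, x_n^*) \in \Pi(X)$ such that $|x_n^*(T(x_n))| > 1 - \tfrac1n$ but no state $(y, y^*) \in \Pi(X)$ with $|y^*(T(y))| = 1$ satisfies $\|y - x_n\| < \e_0$ and $\|y^* - x_n^*\| < \e_0$ simultaneously.

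The main step is then to extract a limit. Since $\dim X < \infty$, the product $S_X \times S_{X^*}$ is (norm-)compact, so after passing to a subsequence I may assume $x_n \to x_0 \in S_X$ and $x_n^* \to x_0^* \in S_{X^*}$. I would then check the two routine points: (i) the limit pair is still a state, i.e.\ $(x_0, x_0^*) \in \Pi(X)$, which follows from continuity of the evaluation map since $x_0^*(x_0) = \lim_n x_n^*(x_n) = 1$; and (ii) $T$ attains its numerical radius at $(x_0, x_0^*)$, which follows because $(x, x^*) \mapsto |x^*(T(x))|$ is continuous and $1 \geq |x_n^*(T(x_n))| > 1 - \tfrac1n$ forces $|x_0^*(T(x_0))| = 1 = v(T)$. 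Finally, taking $n$ so large that $\|x_n - x_0\| < \e_0$ and $\|x_n^* - x_0^*\| < \e_0$ exhibits a numerical radius attaining state of $T$ lying within $\e_0$ of $(x_n, x_n^*)$ in both coordinates, contradicting the choice of $(x_n, x_n^*)$.

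I do not expect a genuine obstacle here: the only things to be careful about are that $\Pi(X)$ is closed (so that the limit pair is again a state) and that the dual functionals converge in norm — both automatic in finite dimensions. This is the same compactness principle used for $\mathbf{L}_{p,p}$-nu in the paragraph preceding Proposition~\ref{finite}, but it is even cleaner here: since we fix the operator $T$ and vary only the state, we need compactness of $\Pi(X)$ rather than of a ball of operators, and hence no hypothesis on $n(X)$ is required.
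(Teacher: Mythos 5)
Your argument is correct and is exactly the compactness argument the paper sketches (the paper only gives the one-line hint that compactness of the unit balls of $X$ and $X^*$ yields a contradiction, and your write-up fills in precisely those details). Your closing observation that, unlike the $\mathbf{L}_{p,p}$-nu case, no hypothesis on $n(X)$ is needed because one only uses compactness of $\Pi(X)$ rather than of the $v$-unit ball of $\mathcal{L}(X)$, is also accurate and matches the paper, which states Proposition~\ref{fdim} without any assumption on the numerical index.
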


In view of the previous result, it is natural to ask whether there are infinite dimensional Banach spaces satisfying {\bf L}$_{o,o}$-nu. Under some general assumptions on the space,  we prove that this is not the case.

\begin{prop}\label{Loo finite dim}
Let $X$ be a Banach space with the approximation property and $n(X) >0$. If $X$ has the {\bf L}$_{o,o}$-nu, then $X$ is finite dimensional.
\end{prop}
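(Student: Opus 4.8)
The plan is to argue by contradiction: assuming $X$ is infinite dimensional, with the approximation property and $n(X)>0$, I want to build an operator $T$ with $v(T)=1$ and a sequence of states $(x_n,x_n^*)\in\Pi(X)$ with $|x_n^*(T(x_n))|\to 1$ but such that no state close to $(x_n,x_n^*)$ attains the numerical radius of $T$ — thereby violating the {\bf L}$_{o,o}$-nu with a single fixed operator. The approximation property enters to guarantee a good supply of finite-rank operators (and, combined with $n(X)>0$, that $v(\cdot)$ is an equivalent norm on $\mathcal{L}(X)$, so numerical-radius statements transfer to norm statements up to the constant $n(X)$). The overall template mirrors the two-dimensional computation carried out just before Theorem~\ref{index_1-1-2}: there one used an Auerbach basis $\{(e_1,e_1^*),(e_2,e_2^*)\}$ and the operator $T_n(x)=\beta_n e_1^*(x)e_1+e_2^*(x)e_2$, whose state of almost-attainment $(e_1,e_1^*)$ is isolated from the actual attaining states. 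The key is to push this to an infinite-dimensional, $n$-independent construction.

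The concrete steps I would carry out are as follows. First, using infinite dimensionality together with the approximation property, extract a basic-like sequence $(e_n)$ in $S_X$ together with norming functionals $(e_n^*)$ with $e_n^*(e_n)=1$ and $e_n^*(e_m)$ small for $n\ne m$ (an almost-Auerbach / almost-biorthogonal system, obtained e.g.\ via a Mazur-type construction), and a bounded projection-like structure onto their span. Second, define a diagonal-type operator $T$ on (a complemented piece of) $X$ by $T=\sum_k \beta_k\, e_k^*\otimes e_k + (\text{a fixed rank-one or identity-on-complement piece})$, with scalars $\beta_k\uparrow 1$, $\beta_k<1$, chosen so that $v(T)=1$ and $T$ attains its numerical radius \emph{only} on the part of the sphere where the complementary piece is fully engaged — so that the states $(e_k,e_k^*)$ satisfy $|e_k^*(T(e_k))|=\beta_k\to 1$ yet every attaining state $(y,y^*)$ forces $\|y-e_k\|$ and/or $\|y^*-e_k^*\|$ to stay bounded below by a universal constant. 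Third, verify the two numerical-radius computations: $v(T)=1$ (upper bound from $\|T\|\le 1$-type estimates, lower bound from the $(e_k,e_k^*)$ or the complementary state), and the ``isolation'' estimate, which is the analogue of the displayed chain $1\le|y^*(T_ny)|\le\|T_ny\|\le\beta_n+(1-\beta_n)|e_2^*(y)|\le 1$ in the two-dimensional case, forcing $|e_k^*(y)|$ or a complementary coordinate of $y$ to equal $1$ and hence $\|e_k-y\|\ge$ const. This contradicts {\bf L}$_{o,o}$-nu for the single operator $T$ once $k$ is large enough that $\beta_k>1-\eta(\e,T)$ for a fixed small $\e$.

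The main obstacle I expect is arranging the construction so that the operator $T$ is a genuine, fixed operator on all of $X$ (not on some auxiliary subspace), controlling its behavior on the whole unit sphere simultaneously — i.e.\ ruling out that $T$ picks up extra attaining states ``at infinity'' or off the chosen basic sequence. This is exactly where the approximation property should be used quantitatively: it lets one approximate the identity on finite-dimensional chunks by finite-rank maps and thereby decompose $X$ into a part spanned by $(e_k)$ plus a well-behaved complement, so that the numerical radius of $T$ can be read off coordinate-wise. A secondary technical point is that $v(\cdot)$, rather than $\|\cdot\|$, governs attainment, so every norm estimate above must be routed through the inequality $n(X)\|\cdot\|\le v(\cdot)\le\|\cdot\|$, and the scalars $\beta_k$ must be chosen with that distortion factor in mind; but since $n(X)>0$ is assumed, this only changes constants and not the structure of the argument.
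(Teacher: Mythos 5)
Your proposal has a genuine gap: the entire argument rests on a construction that is never carried out and that I do not believe can be carried out in the stated generality. The two-dimensional computation you take as a template uses a \emph{sequence} of operators $T_n$, one for each $\beta_n=1-\frac1n$; that is precisely why it refutes the uniform BPBop-nu while being perfectly consistent with Proposition~\ref{fdim}, which says every finite dimensional space \emph{does} have the {\bf L}$_{o,o}$-nu. To violate the local property you must produce a single fixed $T$ with $v(T)=1$, a sequence of states $(x_k,x_k^*)$ with $|x_k^*(Tx_k)|\to1$, and a uniform separation of these states from every attaining state of $T$. For a diagonal operator $\sum_k\beta_k\,e_k^*\otimes e_k$ with $\beta_k\uparrow1$ the natural outcome is that $T$ attains its numerical radius nowhere (as happens already in $\ell_2$), and grafting on a ``complementary piece'' so that $v(T)=1$ is attained, but only far from the $(e_k,e_k^*)$, requires controlling the supremum over all of $\Pi(X)$ simultaneously. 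In an arbitrary infinite-dimensional space with the approximation property and $n(X)>0$ you have no Auerbach-type system with the biorthogonality, boundedness and norming properties you need, no complemented decomposition on which to ``read off the numerical radius coordinate-wise,'' and no way to rule out extra attaining states; the approximation property gives finite-rank approximations of operators, not a Schauder-type decomposition of $X$. So the key step of your plan is exactly the part that is missing.

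The paper's proof is entirely different and much softer, and it is worth seeing why no construction is needed. From the {\bf L}$_{o,o}$-nu one only keeps the consequence that \emph{every} $T\in\mathcal L(X)$ attains its numerical radius. This forces $X$ to be reflexive by the Acosta--Ruiz Gal\'an version of James' theorem for numerical radius. One then identifies $(\mathcal L(X),v(\cdot))$ isometrically with the space of bilinear forms on $X\times X^*$ normed by the supremum over $\Pi(X)$ (here $n(X)>0$ guarantees this is a norm), and realizes the latter as the dual $Y^*$ of a projective-tensor-type space $Y$ built on $\spann\{x\otimes x^*:(x,x^*)\in\Pi(X)\}$. Since every functional on $Y$ attains its norm, James' theorem makes $Y$, hence $Y^*$, hence $(\mathcal L(X),\|\cdot\|)$ reflexive (the norms $v$ and $\|\cdot\|$ being equivalent because $n(X)>0$). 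Finally Holub's theorem, which is where the approximation property is actually used, gives $\mathcal L(X)=\mathcal K(X)$, so the identity is compact and $X$ is finite dimensional. If you want to salvage your line of attack you would have to restrict to spaces with enough coordinate structure (e.g.\ a shrinking basis), which is strictly weaker than the proposition as stated.
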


 \begin{proof}
Note that, since $X$ has the  {\bf L}$_{o,o}$-nu, then every $T\in \mathcal{L}(X)$ attains its numerical radius and, hence, $X$ is reflexive by \cite[Theorem~1]{AcoRui}.
		Now, in the space $Bil(X\times X^*)$ of bilinear forms from $X\times X^*$ to $\mathbb{K}$, we consider the norm
		$$
		\|\varphi\|_{\Pi}=\sup_{(x,x^*)\in \Pi(X)} |\varphi(x,x^*)|.
		$$

We note that this value becomes a norm from the assumption $n(X)>0$.	Since $X$ is reflexive, the mapping
		\begin{eqnarray}\label{isomorphism}
		\nonumber \left( \mathcal{L}(X), v(\cdot)\right) &\to& \left(Bil(X\times X^*), \|\cdot\|_{\Pi}\right)\\
		T&\mapsto& \varphi_T, \quad \text{with $\varphi_T(x,x^*)=x^*(Tx)$}
		\end{eqnarray}
		is an isometric isomorphism. 

We now claim that $\left(Bil(X\times X^*), \|\cdot\|_{\Pi}\right)$ is a reflexive space. In that case, $\left( \mathcal{L}(X), v(\cdot)\right)$ is reflexive and, since $v(\cdot)$ and $\|\cdot\|$ are equivalent norms in $\mathcal{L}(X)$, $\left( \mathcal{L}(X), \|\cdot\|\right)$ is reflexive. Then, by \cite{Hol} we  have $\mathcal{L}(X)=\mathcal{K}(X)$ and, consequently, $X$ is a finite dimensional space. 
		
		To prove that $\left(Bil(X\times X^*), \|\cdot\|_{\Pi}\right)$ is a reflexive space, consider the (algebraic) subspace of $X\otimes X^*$ given by
		$$
		Z=\left\{ \sum_{i=1}^n \lambda_i x_i\otimes x_i^*:\,\, n\in \mathbb{N}, \lambda_i\in \mathbb{K}, (x_i,x_i^*)\in \Pi(X)\right\}
		$$
		endowed with the norm 
		$$
		\pi_{\Pi}(u)=\inf\left\{\sum_{i=1}^n |\lambda_i| \|x_i\| \|x_i^*\| \right\}
		$$
		where the infimum is taken over all the representations of $u$ of the form $\sum_{i=1}^n \lambda_i x_i\otimes x_i^*$ with $(x_i,x_i^*)\in \Pi(X)$. Let $Y$ be the completion of $Z$ (with the norm $\pi_{\Pi}(\cdot)$) and 
		let us note that $\left(Bil(X\times X^*), \|\cdot\|_{\Pi}\right)$ and $Y^*=(Y, \pi_{\Pi}(\cdot))^*$ are isometrically isomorphic. Consider the (linear) mapping
		\begin{eqnarray}\label{isometry}
		\nonumber \left(Bil(X\times X^*), \|\cdot\|_{\Pi}\right) &\to& Y^*\\
		\varphi&\mapsto& {L_{\varphi}}_{{|_Y}},
		\end{eqnarray}
		where $ {L_{\varphi}}_{{|_Y}}$ is the restriction to $Y$ of the functional $L_{\varphi}\in (X\hat{\otimes}_\pi X^*)^*$ associated to $\varphi$. Noting that 
		$$
		\|{L_{\varphi}}_{{|_Y}}\|_{Y^*} = \sup_{\pi_\Pi(u)=1} |L_{\varphi}(u)|,
		$$
		and it is easy to check that $\|{L_{\varphi}}_{{|_Y}}\|_{Y^*}=\|\varphi\|_{\Pi}$. Hence, the mapping in \eqref{isometry} is an isometry. It remains to prove that is surjective. Given $L\in Y^*$ it is clear that $L_{|_Z}\in Z^\#$ (the algebraic dual of $Z$), and we can consider $\tilde{L}$ an algebraic extension of $L$ to the vector space $X\otimes X^*$, and $\varphi_{\tilde{L}}$ the (non-necessarily bounded) bilinear form associated to $\tilde{L}$. Now, since 
		$$
		\|\varphi_{\tilde{L}}\|_{\Pi} =\sup_{(x,x^*)\in \Pi(X)} |\tilde{L}(x\otimes x^*)|=\sup_{(x,x^*)\in \Pi(X)} |L(x\otimes x^*)| \leq \|L\|_{Y^*},
		$$ 
		we see that  $\varphi_{\tilde{L}}\in  \left(Bil(X\times X^*), \|\cdot\|_{\Pi}\right)$ and that ${L_{\varphi_{\tilde{L}}}}_{|_Z}=\tilde{L}_{|_Z}=L_{|_Z}$. Then, by a continuity argument (note that $\tilde{L}_{|_Z}$ and $L_{|_Z}$ are bounded) we deduce that ${L_{\varphi_{\tilde{L}}}}_{|_Y}=L_{|_Y}$, which proves that the mapping in \eqref{isometry} is surjective.
		
		Since every $T\in \mathcal{L}(X)$ attains its numerical radius, every $\psi \in Bil(X\times X^*)$ attains the $\|\cdot\|_{\Pi}$-norm and, consequently, every functional in $Y^*$ is norm-attaining. Then, by James' theorem, $Y$ is reflexive and, hence,  $\left(Bil(X\times X^*), \|\cdot\|_{\Pi}\right)$ is reflexive.
 \end{proof}
We do not know what happens in the general case. We finish the paper by highlighting this open question.

\begin{question} Let $X$ be any Banach space. If $X$ has the {\bf L}$_{o,o}$-nu, then $X$ is finite dimensional?
\end{question}

{\bf Acknowledgments:} The authors are grateful to Mingu Jung and Miguel Mart\'in for fruitful conversations on the topic of the paper.

\end{document}